\newtheorem{theorem}{Theorem}[section]
\newtheorem{lemma}[theorem]{Lemma}
\newtheorem{proposition}[theorem]{Proposition}
\newtheorem{corollary}[theorem]{Corollary} 
\theoremstyle{definition}
\newtheorem{definition}[theorem]{Definition}
\newtheorem{example}[theorem]{Example}
\newtheorem{remark}[theorem]{Remark}
\newcommand{\twoheadlongrightarrow}{\relbar\joinrel\twoheadrightarrow}
\newcommand{\R}{\mathbb{R}}
\newcommand{\Z}{\mathbb{Z}}
\newcommand{\ZZ}{\mathbb{Z}}
\newcommand{\CC}{\mathbb{C}}
\newcommand{\N}{\mathbb{N}}
\newcommand{\NN}{\mathbb{N}}
\newcommand{\Mbar}{\overline{M}}
\newcommand{\calA}{\mathcal{A}}
\newcommand{\calC}{\mathcal{C}}
\newcommand{\calL}{\mathcal{L}}
\newcommand{\calO}{\mathcal{O}}
\newcommand{\calS}{\mathcal{S}}
\DeclareMathOperator{\Spec}{Spec}
\DeclareMathOperator{\Pic}{Pic}
\DeclareMathOperator{\Div}{Div}
\DeclareMathOperator{\mdeg}{mdeg}
\DeclareMathOperator{\res}{res}
\title[Logarithmic Picard groups, chip firing, and the combinatorial rank]{Logarithmic Picard groups, chip firing, and the combinatorial rank}
\author{Tyler Foster}
\address{Max Planck Institute for Mathematics\\
Vivatsgasse 7\\
53111 Bonn\\
Germany}
\email{\href{mailto:foster@mpim-bonn.mpg.de}{foster@mpim-bonn.mpg.de}}
\author{Dhruv Ranganathan}
\address{Department of Mathematics\\
Massachusetts Institute of Technology\\
77 Massachusetts Avenue\\
Cambridge MA 02138}
\email{\href{mailto:dhruvr@mit.edu}{dhruvr@mit.edu}}
\author{Mattia Talpo}
\address{Department of Mathematics\\
Simon Fraser University\\
8888 University Drive\\
Burnaby BC\\
V5A 1S6 Canada\\
and Pacific Institute for the Mathematical Sciences\\ 4176-2207 Main Mall \\ Vancouver BC\\ V6T 1Z4 Canada}
\email{\href{mailto:mtalpo@sfu.ca}{mtalpo@sfu.ca}}
\author{Martin Ulirsch}
\address{Department of Mathematics, University of Michigan, Ann Arbor 48109}
\email{\href{mailto:ulirsch@umich.edu}{ulirsch@umich.edu}}
\subjclass[2010]{14H10; 14T05}
\date{\today}
\thanks{T.F.'s research was supported by Institut des Hautes Études Scientifiques, by Le Laboratoire d'Excellence CARMIN, and by a Postdoctoral Fellowship at Max Planck Institute for Mathematics. M.T. was supported by a Postdoctoral Fellowship at the University of British Columbia. M.U.'s research was supported in part by funds by the SFB/TR 45 'Periods, Moduli Spaces and Arithmetic of Algebraic Varieties' of the DFG (German Research Foundation), the Hausdorff Center for Mathematics at the University of Bonn, and the Fields Institute for Research in Mathematical Sciences, Toronto.} 
\begin{document}

\maketitle

\begin{abstract} 
Illusie has suggested that one should think of the classifying group of $M_X^{gp}$-torsors on a logarithmically smooth curve $X$ over a standard logarithmic point as a logarithmic analogue of the Picard group of $X$. This logarithmic Picard group arises naturally as a quotient of the algebraic Picard group by lifts of the chip firing relations of the associated dual graph. We connect this perspective to Baker and Norine's theory of ranks of divisors on a finite graph, and to Amini and Baker's metrized complexes of curves. Moreover, we propose a definition of a \emph{combinatorial rank} for line bundles on $X$ and prove that an analogue of the Riemann-Roch formula holds for our combinatorial rank. Our proof proceeds by carefully describing the relationship between the logarithmic Picard group on a logarithmic curve and the Picard group of the associated metrized complex. This approach suggests a natural categorical framework for metrized complexes, namely the category of logarithmic curves. 
\end{abstract}

\setcounter{tocdepth}{1}
\tableofcontents


\section{Introduction}

Let $k$ be an algebraically closed field and $S=(\Spec k, k^\ast\oplus \N)$ the standard logarithmic point over $k$. Let $X\rightarrow S$ be a logarithmically smooth curve over $S$ of genus $g$. Suppose furthermore that the logarithmic structure on $X$ is vertical (i.e. it encodes no marked points) and logarithmically semistable (i.e. the total space of every logarithmic smoothing of $X$ is smooth). In \cite{Illusie_logspaces} Illusie proposes that in this situation the group 
\begin{equation*}
\Pic^{log}(X):=H^1(X_{et},M^{gp}_X)
\end{equation*}
should function as an analogue of the algebraic Picard group from the point of view of logarithmic geometry. The group $\Pic^{log}(X)$ parametrizes $M^{gp}_X$-torsors on $X_{et}$, and can be expressed as a natural quotient of the algebraic Picard group $\Pic(X)$ by canonical lifts of the chip firing relations in the theory of divisors on finite graphs. We refer the reader to \cite{Kajiwara_logJacobian} and \cite[Section 3]{Olsson_K3} for a discussion of the geometric properties of the logarithmic Picard group and to Section \ref{section_logPic} below for its precise connection to chip firing on graphs. 

Let $\calL$ be a line bundle on $X$, representing an $M^{gp}_X$-torsor. Inspired by analogous constructions in tropical geometry (see \cite{BakerNorine_RiemannRoch}, \cite{AminiCaporaso_RiemannRoch}, and \cite{AminiBaker_metrizedcurvecomplexes}) we define in Section \ref{section_positivity&rank} below the \emph{combinatorial rank} $r(\calL)=r_{comb}(\calL)$ of $\calL$. We also define the \emph{degree} of $\calL$ as the sum of the degrees of the pullbacks of $\calL$ to the connected components of the normalization of $X$. Denote by $\omega_X^{log}$ the relative logarithmic cotangent bundle on $X$. Our main result is the following Riemann-Roch theorem. 

\begin{theorem}[Riemann-Roch Theorem for logarithmic curves]\label{thm_logRR}
For a line bundle $\calL$ of degree $d$ on $X$ we have
\begin{equation*}
r(\calL) - r(\omega_X^{log}\otimes\calL^{-1}) = d-g+1 \ .
\end{equation*}
\end{theorem}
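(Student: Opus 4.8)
The plan is to reduce the statement to the Riemann-Roch theorem of Amini and Baker for metrized complexes of curves \cite{AminiBaker_metrizedcurvecomplexes}. To the logarithmically smooth curve $X\to S$ I would first associate a metrized complex $\mathfrak{C}$: its underlying graph is the dual graph $G$ of $X$, the edge lengths are read off from the logarithmic structure (the element of $\N$ attached to each node), the curve sitting over a vertex $v$ is the corresponding component $C_v$ of the normalization of $X$, and the marked points on $C_v$ are the preimages of the nodes. The metrized complex $\mathfrak{C}$ packages precisely the data that the logarithmic structure records, so this assignment should be functorial and compatible with the constructions of Section \ref{section_logPic}.

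The heart of the argument is a dictionary between line bundles on $X$, viewed through the logarithmic Picard group $\Pic^{log}(X)$, and divisor classes on $\mathfrak{C}$. First I would send $\calL$ to a divisor $\mathcal{D}=\sum_v \mathcal{D}_v$ by letting $\mathcal{D}_v$ be a divisor representing the restriction of $\calL$ to $C_v$; the multidegree of $\calL$ then becomes the degree of $\mathcal{D}$ on the components, and the underlying divisor on the metric graph $\Gamma$ records the vertex-degrees. The key claim is that the canonical lifts of the chip-firing relations presenting $\Pic^{log}(X)$ as a quotient of $\Pic(X)$ (Section \ref{section_logPic}) correspond exactly to the linear equivalences on $\mathfrak{C}$ generated by rational functions on $\Gamma$ together with rational functions on the components $C_v$. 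Granting this, the induced map $\Pic^{log}(X)\to\Pic(\mathfrak{C})$ is an isomorphism, and I would show that the notion of effectivity used in the definition of $r(\calL)$ matches effectivity of divisors on $\mathfrak{C}$. Since both the combinatorial rank and the Amini-Baker rank follow the same Baker-Norine recipe---the maximal $k$ such that subtracting every effective divisor of degree $k$ leaves an effective class---these compatibilities yield $r(\calL)=r_{\mathfrak{C}}(\mathcal{D})$.

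It then remains to match the remaining invariants. The arithmetic genus $g$ of $X$ equals $b_1(G)+\sum_v g(C_v)$, which is exactly the genus $g(\mathfrak{C})$ of the metrized complex, and the degree $d$ of $\calL$ defined above coincides with $\deg(\mathcal{D})$. Finally I would identify $\omega_X^{log}$ with the canonical divisor $K_{\mathfrak{C}}$: restricting the relative logarithmic cotangent bundle to a component $C_v$ produces $\omega_{C_v}$ twisted by the nodes lying on $C_v$, that is $K_{C_v}+\sum_{e\ni v}x_e^v$, which is precisely the local description of $K_{\mathfrak{C}}$. With these identifications in hand, the Riemann-Roch theorem for metrized complexes reads
\begin{equation*}
r_{\mathfrak{C}}(\mathcal{D}) - r_{\mathfrak{C}}(K_{\mathfrak{C}} - \mathcal{D}) = \deg(\mathcal{D}) - g(\mathfrak{C}) + 1,
\end{equation*}
and translating each term back across the dictionary gives the desired formula.

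The step I expect to be the main obstacle is establishing the isomorphism $\Pic^{log}(X)\cong\Pic(\mathfrak{C})$ together with the compatibility of ranks in the second paragraph. Showing that the lifted chip-firing relations account for exactly the metric-graph linear equivalences---no more and no less---requires a careful local analysis of $M_X^{gp}$-torsors at the nodes and of how their tropicalization interacts with the component-wise algebraic data; reconciling the two \emph{a priori} different notions of effectivity is the delicate point on which the whole reduction hinges.
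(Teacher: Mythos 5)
Your overall strategy is exactly the one the paper follows: pass to the associated metrized curve complex (Theorem~\ref{thm_logcurve=mcc}), compare Picard groups and ranks (Proposition~\ref{prop_Pic=Pic}), identify $\omega_X^{log}$ with the canonical divisor $K_{\calC}$ (Lemma~\ref{lemma_canonical}), and invoke Amini--Baker. The genus and degree bookkeeping in your third paragraph is also correct. However, the key claim in your second paragraph --- that the induced map $\Pic^{log}(X)\to\Pic(\calC)$ is an \emph{isomorphism} --- is false whenever the dual graph has a cycle. The metrized complex only remembers divisors on the normalization $\widetilde{X}$, so $\Pic(\calC)$ is a quotient of $\Pic(\widetilde{X})$ by chip-firing; by contrast $\Pic(X)$ is an extension of $\Pic(\widetilde{X})$ by the torus $(k^\ast)^{b_1(G)}$ of gluing data at the nodes, and this torus survives intact in $\Pic^{log}(X)$ (the chip-firing lifts $\calL_v$ inject into $\Pic(\widetilde{X})$, so quotienting by them does not touch the torus). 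The correct statement, which is what Proposition~\ref{prop_Pic=Pic} proves, is that $\Pic^{log}(X)\to\Pic(\calC)$ is a surjection with kernel $(k^\ast)^{b_1(G)}$ that nevertheless preserves degrees and ranks.

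This is not merely a cosmetic error, because the rank comparison is exactly where the non-injectivity bites. To show $r_{\calC}(D)\geq r$ implies $r(\calL)\geq r$, you are handed an effective divisor $D''$ on $\calC$ equivalent to $D-D'$, and you must produce a \emph{combinatorially effective} line bundle on $X$ lying in the specific class $[\calL\otimes(\calL')^{-1}]\in\Pic^{log}(X)$ --- not just in some class mapping to $[D-D']$. The fiber of $\Pic^{log}(X)\to\Pic(\calC)$ over that class is a $(k^\ast)^{b_1(G)}$-torsor, and one must check that \emph{some} point of that fiber is represented by $\calO_{\widetilde{X}}(D'')$ with suitably chosen gluing data at the nodes (after arranging that $D''$ avoids the preimages of the nodes). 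Since combinatorial effectivity is insensitive to the gluing data, this works, but it is the one step of the reduction that your ``isomorphism'' framing would paper over rather than address. With that claim corrected to ``degree- and rank-preserving surjection,'' your argument coincides with the paper's proof.
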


Our Theorem~\ref{thm_logRR} is proved by a reduction to the theory of ranks of divisors on metrized complexes of algebraic curves, as developed in \cite{AminiBaker_metrizedcurvecomplexes}. Throughout the paper, we refer to these as {\em metrized curve complexes}. In Section  \ref{section_metrizedcurvecomplexes} and Section \ref{section_proof} below we describe a precise correspondence between logarithmic curves and metrized curve complexes, under which every line bundle $\calL$ on $X$ can be represented by a divisor $D$ on the associated metrized curve complex of the same degree and rank. Therefore Theorem \ref{thm_logRR} is a consequence of the Amini and Baker's Riemann--Roch formula for metrized curve complexes (see \cite[Theorem 3.2]{AminiBaker_metrizedcurvecomplexes}). 

\begin{remark}
We refrain from considering the case of curves with marked points here (corresponding to non-vertical log smooth curves), since the treatment in \cite{AminiBaker_metrizedcurvecomplexes} does not include metrized curve complexes with legs. We do not know if the Riemann-Roch formula continues to hold in this more general setting.
\end{remark}

Like its classical and tropical counterparts, the above Riemann-Roch theorem has a number of well-known formal consequences, such as the following analogue of the ``easy half'' of Clifford's Theorem:

We say that a line bundle $\calL$ on $X$ is \emph{combinatorially special} if $\omega_X^{log}\otimes\calL^{-1}$ is \emph{combinatorially effective} (see Section \ref{section_positivity&rank} below for details). 

\begin{corollary}[Clifford's Theorem for logarithmic curves]
Let $\calL$ be a combinatorially special line bundle on $X$. Then we have
\begin{equation*}
r(\calL)\leq \frac{\deg(\calL)}{2}\,.
\end{equation*}
Moreover, we have equality if and only if $X$ admits a line bundle of degree $2$ and combinatorial rank $1$. 
\end{corollary}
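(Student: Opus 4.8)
The plan is to derive Clifford's inequality formally from the logarithmic Riemann--Roch theorem, Theorem~\ref{thm_logRR}, in exactly the way the classical and tropical statements follow from their respective Riemann--Roch formulas. First I would observe that if $\calL$ is combinatorially special, then by definition $\omega_X^{log}\otimes\calL^{-1}$ is combinatorially effective, which should force $r(\omega_X^{log}\otimes\calL^{-1})\geq 0$; and since $\calL$ itself must be assumed (or shown, under the hypotheses of Clifford) to be combinatorially effective as well, we have $r(\calL)\geq 0$. Writing $d=\deg(\calL)$ and letting $K$ denote the canonical class so that $\deg(\omega_X^{log})=2g-2$, Riemann--Roch gives
\begin{equation*}
r(\calL)-r(\omega_X^{log}\otimes\calL^{-1})=d-g+1.
\end{equation*}

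The key step is to produce a second, complementary inequality by applying Riemann--Roch to a well-chosen auxiliary line bundle, mimicking the combinatorial proof of Clifford's theorem for graphs in \cite{BakerNorine_RiemannRoch}. Concretely, I would look for a subadditivity property of the combinatorial rank of the form $r(\calL_1)+r(\calL_2)\leq r(\calL_1\otimes\calL_2)$ whenever both $\calL_1$ and $\calL_2$ are combinatorially effective. Applying this with $\calL_1=\calL$ and $\calL_2=\omega_X^{log}\otimes\calL^{-1}$, whose tensor product is $\omega_X^{log}$ of degree $2g-2$ and combinatorial rank $g-1$, yields
\begin{equation*}
r(\calL)+r(\omega_X^{log}\otimes\calL^{-1})\leq r(\omega_X^{log})=g-1.
\end{equation*}
Adding this to the Riemann--Roch identity and eliminating $r(\omega_X^{log}\otimes\calL^{-1})$ gives $2\,r(\calL)\leq d$, which is the desired bound $r(\calL)\leq \deg(\calL)/2$.

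The main obstacle will be establishing the subadditivity inequality $r(\calL_1)+r(\calL_2)\leq r(\calL_1\otimes\calL_2)$ for the combinatorial rank. In the graph-theoretic setting this is proved via a clever chip-firing argument using the behavior of $v$-reduced divisors, so I expect the cleanest route is to transport the statement across the correspondence between logarithmic curves and metrized curve complexes described in Sections~\ref{section_metrizedcurvecomplexes} and~\ref{section_proof}: each $\calL_i$ corresponds to a divisor $D_i$ of the same degree and rank on the associated metrized curve complex, and subadditivity of the Baker--Norine-type rank on metrized curve complexes should already be available (or provable by the same reduction methods) in \cite{AminiBaker_metrizedcurvecomplexes}. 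It also requires knowing that $\omega_X^{log}$ has combinatorial rank exactly $g-1$, which follows from Riemann--Roch applied to $\calL=\omega_X^{log}$ together with $r(\calO_X)=0$.

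Finally, for the equality case I would trace through the two inequalities and determine when both become equalities. Equality in $2\,r(\calL)\leq d$ forces equality in the subadditivity bound, and a standard argument then shows this can happen only when $\calL$ decomposes as a sum of $r(\calL)$ copies of a fixed degree~$2$, combinatorial rank~$1$ line bundle (a ``hyperelliptic'' class in this setting); conversely the existence of such a degree~$2$, rank~$1$ bundle produces effective special line bundles attaining equality. This matches the stated characterization, namely that equality holds if and only if $X$ admits a line bundle of degree $2$ and combinatorial rank $1$.
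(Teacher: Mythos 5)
The paper gives essentially no proof of this corollary: the inequality is presented as a ``well-known formal consequence'' of Theorem~\ref{thm_logRR}, and the equality statement is explicitly attributed to Len's Clifford theorem for metrized curve complexes \cite{Len16} transported across the comparison of Proposition~\ref{prop_Pic=Pic}. Your derivation of the inequality is the intended formal argument and is essentially correct. You are right to flag that the hypothesis must implicitly include combinatorial effectivity of $\calL$ itself (otherwise $r(\calL)=-1$ and the bound can fail when $\deg\calL<-2$). The subadditivity lemma you isolate is, however, easier than you make it: given $\calL_1,\calL_2$ with $r(\calL_i)=r_i\geq 0$ and any $\calL'$ of degree $r_1+r_2$, factor $\calL'=\calL_1'\otimes\calL_2'$ with $\deg\calL_i'=r_i$, choose combinatorially effective $\calM_i\in\vert\calL_i\otimes(\calL_i')^{-1}\vert$, and observe that $\calM_1\otimes\calM_2$ is again combinatorially effective because on each $\widetilde X_v$ the product of two nonzero sections is a nonzero section. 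No transport to metrized complexes and no reduced-divisor machinery is needed for this step; together with $r(\omega_X^{log})=g-1$ (from Riemann--Roch applied to $\calO_X$) it gives $2r(\calL)\leq\deg(\calL)$ exactly as you describe.

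The genuine gap is in the equality case. Your claim that ``a standard argument'' shows equality forces $\calL$ to decompose as a sum of $r(\calL)$ copies of a degree~$2$, rank~$1$ class is not a formal consequence of anything established so far: even for finite graphs the equality half of Clifford's theorem was a separate, substantial result (conjectured by Baker, proved by Coppens for metric graphs), and for metrized curve complexes it is precisely the content of Len's theorem \cite{Len16}; the classical proof via the base-point-free pencil trick does not transfer to the combinatorial rank. The paper's route is to invoke Len's result and pull it back along the degree- and rank-preserving surjection $\Pic^{log}(X)\twoheadrightarrow\Pic(\calC)$ of Proposition~\ref{prop_Pic=Pic}, which converts the statement ``$\calC$ admits a divisor of degree $2$ and rank $1$'' into ``$X$ admits a line bundle of degree $2$ and combinatorial rank $1$.'' You should replace your sketch of the forward implication with that citation and reduction (the converse direction, producing a special bundle attaining equality from a degree-$2$ rank-$1$ class, is the easy one); as written, the decomposition step is an unproved theorem rather than a routine verification.
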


\noindent
The second statement above follows from the version of Clifford's theorem proved for metrized curve complexes by Len~\cite{Len16} and our comparison between logarithmic curves and metrized complexes. 

If the components of $X$ are smooth (i.e. if they do not have self-intersections), a notion of the ``combinatorial rank" of a line bundle on $X$ has already been proposed by Amini and Baker in \cite[Section 2.2]{AminiBaker_metrizedcurvecomplexes} without using the formalism provided by logarithmic Picard groups. Our definition of a combinatorial rank is a generalization of their definition that applies to general nodal curves (see Corollary \ref{corollary_combinatorialrank=combinatorialrank} below).

\subsection{Motivation and further discussion} In recent years, there has been rapid development in  the theory of linear series for degenerations of curves outside the compact-type locus in the moduli space of curves. Let $\mathscr X\to \Spec(R)$ be a flat, regular, semistable degeneration of a smooth curve over a discrete valuation ring. If the special fiber of $\mathscr X$ has more than one irreducible component, then any line bundle $L$ on the generic fiber of $\mathscr X$ has infinitely many extensions to the special fiber. The combinatorial aspects of the interactions between these extensions can be captured by an independently studied combinatorial tool on the dual graph $G$ of the special fiber of $\mathscr X$, known as \textit{chip-firing} relations. These chip-firing relations can be interpreted as a ``tropical'' notion of linear equivalence for an entirely combinatorial divisor theory on $G$. In this framework we may then study degenerations that have low geometric complexity, for instance, degenerations to rational curves, but high combinatorial complexity in $G$. This theory was pioneered by Baker and Norine~\cite{BakerNorine_RiemannRoch, Baker_specialization} and has led to numerous applications to the geometry and arithmetic of algebraic curves. We direct the reader to the survey~\cite{BakerJensen} and to the references therein, for an introduction to this theory and its applications.

In order to interpolate between the compact-type case considered by Eisenbud and Harris~\cite{EH86a} and the maximal degenerations considered in the tropical approach, Amini and Baker introduced their notion of metrized curve complexes 
This theory consists of hybrid objects that are simultaneously tropical and algebraic in nature. Its ``algebraic part'' is a nodal curve, while its ``tropical part'' is a graph. While these objects provide a flexible framework to study degenerations of linear series, so far there has not existed a natural categorical framework in which to study them. 

One of the primary insights of this paper is that logarithmically smooth curves over the standard logarithmic point form a natural and well-established category that contains the theory of metrized curve complexes. Just like a metrized curve complex, any logarithmic scheme $Y$ has an underlying scheme $\underline Y$, as well as a tropicalization $Y^\mathrm{trop}$, via the results of~\cite{U13}. These associations are functorial, and moreover, logarithmic schemes form a category with a good notions of morphisms and moduli functors. In the context of linear series, we suggest here that the notion of a ``logarithmic Picard group'', i.e. the classifying group for torsors of $M^{gp}_X$, has good interaction with the existing tropical and algebraic theories. For instance, Theorem \ref{thm_logRR} reduces to both the classical Riemann-Roch theorem and Baker and Norine's Riemann-Roch theorem for finite graphs in special cases.

\begin{example}[Classical Riemann-Roch]\label{example_classicalRR}
Suppose that the curve $\underline{X}$ underlying $X$ is already smooth. In this case the dual graph of $X$ is a single vertex and hence the chip-firing relations are trivial. We deduce that $\Pic^{log}(X)=\Pic(X)$, and for every line bundle $\calL$ on $X$ the formula $r_{comb}(\calL)=r_{alg}(\calL)=h^0(X,\calL)-1$ holds. Thus Theorem \ref{thm_logRR} reduces to the classical Riemann-Roch formula
\begin{equation*}
h^0(\calL) - h^0(\omega_X\otimes \calL^{-1})= \deg\calL -g +1 \ .
\end{equation*}
\end{example}

\begin{example}[Riemann-Roch for graphs]\label{example_graphRR}
Given a finite graph $G$, one can construct a maximally degenerate logarithmically smooth, semistable, unmarked curve $X_G$ over $S$ such that the dual graph of $X_G$ is $G$.  If the vertices of $G$ have valence either $2$ or $3$, the logarithmic curve $X_G$ is unique up to isomorphism. For any such $X_G$, every divisor $D$ on $G$ arises as the multidegree of a line bundle $\calL$ on $X_G$, the combinatorial rank $r(\calL)$ of $\calL$ is exactly the Baker-Norine rank $r_G(D)$ of $D$ on $G$ (see \cite{BakerNorine_RiemannRoch}), and Theorem \ref{thm_logRR} reduces to the Riemann-Roch formula
\begin{equation*}
r_G(D)-r_G(K_G-D)=\deg D -g +1
\end{equation*}
(see \cite[Theorem 1.12]{BakerNorine_RiemannRoch} or \cite[Theorem 3.6]{AminiCaporaso_RiemannRoch} if $G$ has loops).
\end{example}

In view of Example \ref{example_classicalRR} above it is tempting to suspect that a proof of Theorem \ref{thm_logRR} would provide us with a new proof of the classical Riemann-Roch theorem. However, the proof of the Riemann-Roch theorem for metrized curve complexes in \cite{AminiBaker_metrizedcurvecomplexes} and therefore our proof of Theorem \ref{thm_logRR} employs both the classical Riemann-Roch theorem as well as essential ideas from the proof of Baker-Norine's graph-theoretic Riemann-Roch theorem. 

We close by mentioning two alternate approaches that may help us gain a deeper understanding of the geometry of a metrized curve complex. The first proceeds via Huber's theory of adic spaces, with the tropical connections being established by Foster and Payne~\cite{F15,FP15}. In this approach, one first chooses a semistable formal model $\mathscr X$ of a curve. The collection of all models obtained by admissible formal blowups of $\mathscr X$ forms an inverse system of formal models, and the inverse limit of this system is naturally identified with the Huber adic space $\mathscr X^{\mathrm{ad}}$. By restricting this inverse limit to the subsystem corresponding to modifications that do not change the homeomorphism type of the dual graph, one obtains an \textit{adic skeleton} of $\mathscr X$. The metrized complex associated to $\mathscr X$ can be understood as a partial Hausdorff quotient of this adic skeleton. This gives the metrized complex a topological structure, enriching the originally only set-theoretic definitions. Note that the adic skeleton also comes with a structure sheaf of analytic functions. 

Another approach to understanding metrized complexes proceeds via Parker's theory of \textit{exploded manifolds}. This theory has been developed with a view towards applications in symplectic Gromov--Witten theory. However, every exploded curve in Parker's theory comes with a tropical part and a geometric part. The tropical part of an exploded curve is a dual graph of a prestable curve with a piecewise integer affine structure, while the geometric part is naturally identified with the complex points of a stable curve. In fact, as explained in~\cite[Section 10]{Par12b}, the points of the moduli stack of exploded curves can be naturally identified with the groupoid of logarithmic curves over $(\Spec \ \CC,\CC^*\oplus \NN)$. Under this equivalence, families of exploded curves correspond to families of logarithmic curves over logarithmic bases. 

It is our hope that these results will motivate a further study of the relationship between logarithmic geometry, limit linear series, and tropical Brill--Noether theory, mirroring the development of such a relationship in the theory of stable maps. 

\subsection{Acknowledgements}
The authors would like to thank Jonathan Wise for bringing Illusie's logarithmic Picard group to their attention, sometimes through intermediaries, as well as Dan Abramovich, Yoav Len, and Jeremy Usatine for many discussions related to this topic. This collaboration started at the CMO-BIRS workshop on \emph{Algebraic, Tropical, and Non-Archimedean Analytic Geometry of Moduli Spaces}; many thanks to the organizers Matt Baker, Melody Chan, Dave Jensen, and Sam Payne. Particular thanks are due to Farbod Shokrieh, for his advice to the last author concerning rank-determining sets (as hinted upon in Remark \ref{remark_rankdeterminingsets} below). The document has been improved by the helpful comments of an anonymous referee.


\section{The logarithmic Picard group and chip firing}\label{section_logPic}

Throughout this article, unless mentioned otherwise, all monoids and logarithmic structures are fine and saturated. In particular, all logarithmic structures are defined on the small \'etale site of a scheme $X$ and all cohomology groups are taken with respect to this \'etale topology. We refer the reader to \cite{Kato_logstr} as well as \cite{Abramovichetal_log&moduli} for the basics of logarithmic geometry. The term \emph{curve} will stand for a one-dimensional reduced and connected scheme that is proper over an algebraically closed field $k$. By a \emph{finite graph} we mean a finite multigraph possibly with loops. 
We will think of every edge of the graph as the union of two \emph{half edges}. 
	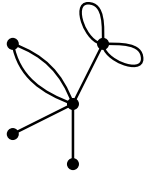
\begin{figure}
	\scalebox{.8}{
	$
	\begin{tikzpicture}
	\draw[black, very thick] (0,0) -- (.5,1);
	\draw[black, very thick] (0,0) to [out=160,in=-80] (-1,1);
	\draw[black, very thick] (0,0) to [out=110,in=-20] (-1,1);
	\draw[black, very thick] (0,0) -- (0,-1);
	\draw[black, very thick] (0,0) -- (-1,-.5);
	\draw[black, very thick] (.5,1) to [out=180,in=180] (.25,1.675) to [out=0,in=90] (.5,1);
	\draw[black, very thick] (.5,1) to [out=-90,in=-90] (1.15,.75) to [out=90,in=0] (.5,1);
	\fill[black] (0,0) circle (.1);
	\fill[black] (.5,1) circle (.1);
	\fill[black] (-1,1) circle (.1);
	\fill[black] (0,-1) circle (.1);
	\fill[black] (-1,-.5) circle (.1);
	\end{tikzpicture}
	$
	}
	\caption{Example of a finite multigraph with loops. 
	}
	\end{figure}

Let $P$ be a sharp monoid and denote by $S:=(\Spec k, k^\ast \oplus P)$ the \emph{logarithmic point} with monoid $P$. Let $X\rightarrow S$ be a logarithmically smooth curve over $S$. Recall from \cite[Section 1.1]{Kato_logsmoothcurves} that the underlying curve $\underline{X}$ is nodal and the characteristic monoid $\Mbar_{X,x}=M_{X,x}/\calO_{X,x}^\ast$ at a closed point $x\in X$ is of one of the following three types:
\begin{itemize}
\item[{\bf (i)}] at a node $x_e$, there exists an element $p_{e}\in P$ with associated map $\NN\longrightarrow P$ giving rise to a monoid coproduct $\N^2\oplus_\N P$ (where $\N\to \N^2$ is the diagonal), and we have $\Mbar_{X,x_e}=\N^2\oplus_\N P$; 
\item[{\bf (ii)}] there is a finite number of smooth points $x_1,\ldots, x_n$ on $X$ such that $\Mbar_{X, x_i}=\N\oplus P$, the \emph{marked points} of $X$, and
\item[{\bf (iii)}] for all other points $x$ of $X$ we have $\Mbar_{X,x}=P$. 
\end{itemize}

If we want to emphasize the dependence of the amalgamated sum $\N^2\oplus_\N P$ in (i) on $p_e$ we use the notation $\N^2\oplus_{\Delta,\N,p_e}P$ instead.

\underline{For the remainder of this section} let us assume $P=\N$, that $X$ is \emph{vertical}, i.e. that it has no marked points, and that $X$ is \emph{logarithmically semistable}, i.e. for every node $x_e$ of $X$ we have $p_e=1$ (so that every logarithmic smoothening of $X$ has a smooth total space; see \cite[Section 2]{Olsson_K3}).

We denote by $G=G_X$ the dual graph of $X$. The vertices $V=V(G)$ of $G$ are in one-to-one correspondence with the components $X_v$ of $X$, and we have an edge $e\in E=E(G)$ between two vertices $v,v'$ for each node $x_e$ connecting the two components $X_v$ and $X_{v'}$. Note that we explicitly allow $v=v'$, i.e. a component $X_v$ can intersect itself in a node $x_e$; the corresponding edge is a loop in $G$ emanating from $v$. 
 
Denote by $\pi\mathrel{\mathop:}\widetilde{X}\rightarrow X$ the normalization of $X$, write $X_v$ (respectively $\widetilde{X}_v$) for the irreducible (respectively connected) components of $X$ (respectively $\widetilde{X}$), and let $\pi_v\colon \widetilde{X}_v \to X_v$ denote the restriction of $\pi$.

We have $\Mbar_X^{gp}=\bigoplus_v (\pi_{v})_{\ast}\Z$ and thus $H^1(X, \Mbar_X^{gp})=0$, since $H^1(\widetilde{X}_v, \Z)=0$ for every $v$ (by \cite[IX, Proposition 3.6]{SGA4}). Therefore the fundamental short exact sequence 
\begin{equation*}\begin{CD}
0@>>>\calO_X^\ast @>>>M_X^{gp} @>>>\Mbar_X^{gp} @>>> 0
\end{CD}\end{equation*}
induces a long exact sequence containing the exact sequence
\begin{equation*}\begin{CD}
H^0(X,\Mbar_X^{gp})@>>> H^1(X,\calO_X^\ast) @>>> H^1(X,M^{gp}_X) @>>> 0,
\end{CD}\end{equation*}
and we see that $\Pic^{log}(X)$ is a quotient of $\Pic(X)=H^1(X,\calO_X^\ast)$ by the relations induced by a homomorphism of abelian groups
\begin{equation*}
\Z^V\longrightarrow \Pic(X) \ .
\end{equation*}

\begin{proposition}[\cite{Olsson_K3} Section 3.3] \label{prop_chipfiring}
Denote by $\calL_v$ the image, in $\Pic(X)$, of the basis vector of $\ZZ^{V}$ corresponding to $X_v$. Then there are natural isomorphisms
\begin{equation*}
\calL_v\vert_{X_w} \simeq\calO_{X_w}(D_{vw}) 
\end{equation*}
for $w\neq v$, where $D_{vw}=X_v\cap X_w$, as well as an isomoprhism
\begin{equation*}
\bigotimes_v\calL_v\simeq \calO_X \ .
\end{equation*}
\end{proposition}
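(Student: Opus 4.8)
The plan is to compute the connecting homomorphism $\delta\colon H^0(X,\Mbar_X^{gp})\to H^1(X,\calO_X^\ast)=\Pic(X)$ of the fundamental sequence explicitly and to read off $\calL_v=\delta(e_v)$, where $e_v$ denotes the generator of the summand $(\pi_v)_\ast\Z$ of $H^0(X,\Mbar_X^{gp})=\bigoplus_u H^0(\widetilde{X}_u,\Z)=\Z^V$. I would represent $\delta(e_v)$ by the $\calO_X^\ast$-torsor $q^{-1}(e_v)\subseteq M_X^{gp}$, where $q\colon M_X^{gp}\to\Mbar_X^{gp}$ is the projection: a choice of lifts of $e_v$ to $M_X^{gp}$ over the members of an \'etale cover yields a \v{C}ech $1$-cocycle in $\calO_X^\ast$ representing $\calL_v$. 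The crux is the local description of $e_v$ near the nodes, which I would extract from the structure of $M_X$ recalled above (following \cite{Kato_logsmoothcurves}).

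Near a node $x_e=X_v\cap X_w$ with $v\neq w$, I would choose \'etale coordinates $\xi,\eta$ with $X_v=\{\eta=0\}$ and $X_w=\{\xi=0\}$, and let $\mu,\nu\in M_X$ be the branch monomials with $\alpha(\mu)=\xi$ and $\alpha(\nu)=\eta$; by the semistability assumption $p_e=1$ one has $\bar t=\bar\mu+\bar\nu$ in $\Mbar_{X,x_e}^{gp}=\Z^2$. Passing to the interior of $X_v$ (where $\xi$ is a unit) kills $\bar\mu$, while passing to the interior of $X_w$ kills $\bar\nu$; hence $\bar\nu$ is the branch monomial that specializes to the generator on $X_v$ and to $0$ on $X_w$, so it represents the summand $(\pi_v)_\ast\Z$. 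Therefore $e_v$ is given near $x_e$ by $\nu$, whose image $\alpha(\nu)=\eta$ is a uniformizer of $X_w$ at $x_e$; and $e_v$ vanishes on the interior of every component other than $X_v$ and at every node not meeting $X_v$.

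I would establish $\bigotimes_v\calL_v\simeq\calO_X$ first, as it is the clean half. Summing the local pictures, at each node $x_e=X_v\cap X_w$ one has $\sum_u e_u=\bar\mu+\bar\nu=\bar t$, and on the interior of each component $\sum_u e_u$ is again the generator $\bar t$; thus $\sum_v e_v=q(t)$, where $t\in H^0(X,M_X^{gp})$ is the global monomial pulled back from the generator of $P=\N$ on $S$. By exactness of $H^0(X,M_X^{gp})\xrightarrow{\,q\,}H^0(X,\Mbar_X^{gp})\xrightarrow{\,\delta\,}\Pic(X)$ we conclude $\bigotimes_v\calL_v=\delta\!\left(\sum_v e_v\right)=\delta(q(t))=0$, i.e.\ $\bigotimes_v\calL_v\simeq\calO_X$.

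For the first isomorphism I would restrict $q^{-1}(e_v)$ to $X_w$ (for $w\neq v$) and compute its cocycle for the cover of $X_w$ by $U_0=X_w\setminus D_{vw}$ together with small neighborhoods $U_q$ of the nodes $q\in D_{vw}$. Over $U_0$ the section $e_v$ vanishes, so $1\in M_X^{gp}$ is a lift; over $U_q$ the monomial $\nu$ is a lift, and on $U_0\cap U_q$ it coincides with the unit $\eta=\alpha(\nu)\in\calO_{X_w}^\ast$, a local equation for $q$ vanishing to order one. Since $e_v$ contributes at no other point of $X_w$, the resulting cocycle is a uniformizer at each node of $D_{vw}$, which identifies $\calL_v\vert_{X_w}\simeq\calO_{X_w}(D_{vw})$. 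The main obstacle is precisely this local step: matching the summand $(\pi_v)_\ast\Z$ with the branch monomials of the local chart through their specialization behavior, and fixing the sign of $\delta$ (equivalently, the orientation of the \v{C}ech differential) so that the divisor lands on the correct component with multiplicity $+1$; this is the same normalization that makes the two isomorphisms compatible and reproduces the chip-firing (graph Laplacian) relations.
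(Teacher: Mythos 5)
Your proposal is correct and follows essentially the same route as the paper: both identify $\calL_v$ with the $\calO_X^\ast$-torsor of lifts of the section $e_v$ of $\Mbar_X^{gp}$, compute locally at a node that a lift maps under $\alpha$ to a uniformizer of the opposite branch (giving $\calL_v\vert_{X_w}\simeq\calO_{X_w}(D_{vw})$), and obtain $\bigotimes_v\calL_v\simeq\calO_X$ from the global lift of $\sum_v e_v$ furnished by the image of the generator of $M_S^{gp}$. Your version is merely more explicit about matching $e_v$ to the branch monomial $\nu$ via generization (note: you mean \emph{generizes}, not specializes) and about the \v{C}ech cocycle; no gap.
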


Note in particular that Proposition \ref{prop_chipfiring} also determines $\calL_v\vert_{X_v}$, as it induces a natural isomorphism
\begin{equation*}
\calL_v\vert_{X_v}\simeq  \bigotimes_{w\neq v} \calO_{X_v}(-D_{vw}) 
\end{equation*}
for all components $X_v$ of $X$. 

\begin{proof}[Proof of Proposition \ref{prop_chipfiring}]
Denote by $e_v$ the section of $\Mbar_X^{gp}$ corresponding to $1\in \Z$ on the component $X_v$ of $X$. Then $\calL_v$ is  exactly the $\calO_X^\ast$-torsor of lifts of $e_v$ to $M_X^{gp}$. Let $U$ be an \'etale neighborhood around a node $x_e$ of $X$ connecting $X_v$ and $X_w$, small enough so that we may assume that $X$ is isomorphic to
\begin{equation*}
\Spec k[x,y]/(xy) \ ,
\end{equation*}
where $X_v$ is given by $x=0$ and $X_w$ by $y=0$. Suppose we have a lift $\tilde{e}_v$ of $e_v$ to $M_X^{gp}$ on $U$. Then $\alpha(\tilde{e}_v)\in\calO_X(U)$ is equal to $x$ up to a multiple in $\calO_X^\ast$, and this implies that $\calL_v\vert_{X_w}=\calO_U(x_e)$ on $U$. This local argument implies $\calL_v\vert_{X_w}=\calO_{X_w}(D_{vw})$ globally. 

In order to see $\bigotimes_v\calL_v\simeq \calO_X$ we note that the image of $1\in M_S^{gp}$ is a lift of $(1,1,\ldots, 1)\in \Z^V$ in $\Mbar_X^{gp}$ to a global non-vanishing section of $\bigotimes_v\calL_v$. 
\end{proof}

If $\calL$ is a line bundle on $X$, we denote by $[\calL]$ the associated element of the logarithmic Picard group $\Pic^{log}(X)=H^1(X,M^{gp}_X)$. In what follows we work with line bundles on $X$, which we interpret as representatives of isomorphism classes of $M^{gp}_X$-torsors, via the surjection $\Pic(X)\to \Pic^{log}(X)$. We say that two line bundles $\calL$ and $\calL'$ are \emph{combinatorially equivalent} if $[\calL]=[\calL']$.

Proposition \ref{prop_chipfiring} shows that the relations in $\Pic^{log}(X)$ are really an incarnation, in the world of logarithmic geometry, of chip firing between divisors on the dual graph $G$ of $X$. Indeed, recall from \cite{BakerNorine_RiemannRoch} that a divisor $D$ on a finite graph $G$ is a finite formal sum $\sum a_v v$ over the vertices of $G$. Write $\Div(G)$ for the free abelian group of divisors on $G$. The degree of $D$ is given by $\deg D=\sum a_v$. It is useful to think of $D$ as a configuration of positive and negative chips sitting at the vertices of $G$. A \emph{chip-firing move} at a vertex $v$ transforms a divisor $D$ into another divisor $D'$ that is given by transferring one chip along every edge of $G$ emanating from $v$ to the next vertex. So, after one chip-firing move, we have 
\begin{equation*}
a'_v=a_v-\vert v\vert+2\cdot\#\textrm{ loops at } v
\end{equation*}
where $\vert v\vert$ denotes the valence of the vertex $v$, as well as 
\begin{equation*}
a_w'=a_w+\#\textrm{ edges between }v \textrm{ and } w 
\end{equation*}
for $w\neq v$, and therefore $\deg D=\deg D'$. Denote by $\Pic (G)$ the \emph{Picard group} of $G$, i.e. the group of divisors on $G$ modulo the equivalence relation $\sim$ generated by chip-firing moves.

\begin{corollary}
Denote by $G$ the dual graph of $X$. The multi-degree homomorphism
\begin{equation*}\begin{split}
\mdeg\mathrel{\mathop:} \Pic (X)&\longrightarrow \Div G\\
\calL&\longmapsto \big(\deg\pi^\ast\calL\vert_{\tilde{X}_v}\big)_{v\in V(G)}
\end{split}\end{equation*}
descends to a homomorphism
\begin{equation*}
\tau\mathrel{\mathop:}\Pic^{log}(X)\longrightarrow \Pic (G) \ .
\end{equation*}
\end{corollary}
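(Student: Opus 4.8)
The plan is to exploit the two descriptions of the groups involved as quotients. By Proposition \ref{prop_chipfiring} and the discussion preceding it, $\Pic^{log}(X)$ is the quotient of $\Pic(X)$ by the subgroup generated by the classes $\calL_v$, $v\in V(G)$; and by definition $\Pic(G)$ is the quotient of $\Div(G)$ by the subgroup of principal divisors generated by the chip-firing moves. Since $\mdeg$ is visibly a homomorphism (the degree of the pullback of a tensor product is the sum of the degrees), it suffices to show that $\mdeg$ carries the subgroup $\langle \calL_v\rangle\subseteq\Pic(X)$ into the subgroup of chip-firing relations in $\Div(G)$. Concretely, I would prove that $\mdeg(\calL_v)$ is exactly the principal divisor $\Delta_v$ obtained by performing a single chip-firing move at the vertex $v$.

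The computation of $\mdeg(\calL_v)$ is the heart of the argument and proceeds componentwise using Proposition \ref{prop_chipfiring}. For a vertex $u\neq v$, the proposition gives $\calL_v\vert_{X_u}\simeq\calO_{X_u}(D_{vu})$ with $D_{vu}=X_v\cap X_u$; each point of $D_{vu}$ is a node of $X$ joining the distinct components $X_v$ and $X_u$, hence a smooth point of $X_u$ over which $\pi_u$ is an isomorphism. Therefore $\mdeg(\calL_v)_u=\deg\pi_u^\ast\calO_{X_u}(D_{vu})$ equals the number of nodes between $X_v$ and $X_u$, that is the number $m_{vu}$ of edges of $G$ between $v$ and $u$ --- exactly the change $a_u'-a_u$ prescribed by a chip-firing move at $v$. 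For the diagonal component $u=v$, I would use the induced isomorphism $\calL_v\vert_{X_v}\simeq\bigotimes_{w\neq v}\calO_{X_v}(-D_{vw})$ recorded after Proposition \ref{prop_chipfiring}, whence $\mdeg(\calL_v)_v=-\sum_{w\neq v}m_{vw}$.

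It remains to match this with the diagonal entry of a chip-firing move, and this is where loops require care --- the one genuinely delicate point. The chip-firing formula gives $a_v'-a_v=-\vert v\vert+2\cdot\#\{\text{loops at }v\}$, and since the valence satisfies $\vert v\vert=\sum_{w\neq v}m_{vw}+2\cdot\#\{\text{loops at }v\}$, the loop contributions cancel and this diagonal entry is precisely $-\sum_{w\neq v}m_{vw}$, in agreement with $\mdeg(\calL_v)_v$. I should also check that no self-nodes of $X_v$ enter the computation of $\calL_v\vert_{X_v}$: because the isomorphism above expresses $\calL_v\vert_{X_v}$ in terms of the divisors $D_{vw}$ with $w\neq v$ only, which are supported at the nodes between distinct components, the self-intersection points of $X_v$ contribute degree zero, exactly mirroring their cancellation on the combinatorial side. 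Putting the two cases together yields $\mdeg(\calL_v)=\Delta_v$ for every $v$, so $\mdeg$ sends $\langle\calL_v\rangle$ into the group of principal divisors on $G$ and therefore descends to a homomorphism $\tau\colon\Pic^{log}(X)\to\Pic(G)$, as claimed.
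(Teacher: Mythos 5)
Your proposal is correct and follows essentially the same route as the paper: compute $\mdeg(\calL_v)$ componentwise from Proposition \ref{prop_chipfiring} and identify it with the chip-firing move at $v$, then use that the kernel of $\Pic(X)\twoheadrightarrow\Pic^{log}(X)$ is generated by the $\calL_v$. Your explicit bookkeeping of the loop contributions (via $\vert v\vert=\sum_{w\neq v}m_{vw}+2\cdot\#\{\text{loops at }v\}$) is a welcome elaboration of a step the paper states without derivation, but it is not a different argument.
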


\begin{proof}
In the notation of Proposition \ref{prop_chipfiring} we have that 
\begin{equation*}
\deg \pi^\ast\calL_v\vert_{\widetilde{X}_v}=-\vert v\vert+2\cdot\#\textrm{ loops at } v
\end{equation*}
as well as
\begin{equation*}
\deg\pi^\ast\calL_v\vert_{\widetilde{X}_w}=\#\textrm{ edges between }v \textrm{ and } w \ .
\end{equation*}
Therefore adding $\mdeg(\calL_v)$ to a divisor on $G$ coincides with the operation of carrying out a chip-firing move at the vertex $v$. Since the kernel of $\Pic(X)\twoheadrightarrow \Pic^{log}(X)$ is generated by the $\calL_v$, this implies the claim. 
\end{proof}

Given a line bundle $\calL$ on $X$, we define its \emph{degree} by
\begin{equation*}
\deg(\calL)=\sum_v\deg_{\tilde{X}_v}\pi^\ast \calL\vert_{\tilde{X}_v} \ .
\end{equation*}

\begin{corollary}[\cite{Olsson_K3} Corollary 3.5]\label{corollary_degree}
The degree map descends to a map 
\begin{equation*}
\deg\mathrel{\mathop:}\Pic^{log}(X)\longrightarrow \Z \ .
\end{equation*}
\end{corollary}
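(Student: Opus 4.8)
The plan is to exploit the fact, established just above, that $\Pic^{log}(X)$ is the quotient of $\Pic(X)$ by the subgroup generated by the classes $\calL_v$, one for each vertex $v\in V(G)$. Since the degree map is already a homomorphism on $\Pic(X)$, to show that it descends to this quotient it suffices to verify that $\deg(\calL_v)=0$ for every $v$. Once this is known, the degree of any line bundle is unchanged after tensoring by an arbitrary product of the $\calL_v^{\pm 1}$, and hence $\calL\mapsto\deg(\calL)$ is constant on the cosets of the kernel of $\Pic(X)\twoheadrightarrow\Pic^{log}(X)$ and factors through a well-defined map on $\Pic^{log}(X)$.

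First I would reuse the multidegree computation carried out in the proof of the preceding corollary, namely $\deg\pi^\ast\calL_v\vert_{\widetilde{X}_v}=-\vert v\vert+2\cdot\#\{\text{loops at }v\}$ together with $\deg\pi^\ast\calL_v\vert_{\widetilde{X}_w}=\#\{\text{edges between }v\text{ and }w\}$ for $w\neq v$. Summing these local contributions over all components gives
\begin{equation*}
\deg(\calL_v)=-\vert v\vert+2\cdot\#\{\text{loops at }v\}+\sum_{w\neq v}\#\{\text{edges between }v\text{ and }w\}.
\end{equation*}

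The key observation is then purely combinatorial: the valence $\vert v\vert$ counts the half edges incident to $v$, where each loop at $v$ contributes two half edges and each non-loop edge joining $v$ to some $w\neq v$ contributes one. That is, $\vert v\vert=2\cdot\#\{\text{loops at }v\}+\sum_{w\neq v}\#\{\text{edges between }v\text{ and }w\}$, so the right-hand side above vanishes identically and $\deg(\calL_v)=0$. Conceptually this is just the remark, already made in the discussion of chip firing, that a chip-firing move preserves the total number of chips, i.e. that $\deg D=\deg D'$: indeed $\deg(\calL)$ coincides with the total degree $\deg\mdeg(\calL)$ of the associated divisor on $G$, and adding $\mdeg(\calL_v)$ is exactly a chip-firing move at $v$.

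I do not expect any real obstacle here; the only point requiring care is the bookkeeping of loops, which contribute twice to the valence but whose two endpoints coincide, so that the self-contribution $-\vert v\vert$ and the loop term $2\cdot\#\{\text{loops at }v\}$ cancel correctly against the off-diagonal edge counts. With $\deg(\calL_v)=0$ verified for every $v$, the degree map is constant on the fibers of $\Pic(X)\twoheadrightarrow\Pic^{log}(X)$ and therefore descends to a homomorphism $\deg\colon\Pic^{log}(X)\to\Z$, as claimed.
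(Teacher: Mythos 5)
Your proof is correct and follows essentially the same route as the paper: the paper reduces $\deg(\calL)$ to $\deg(\mdeg\calL)$ on $G$ and invokes the invariance of the degree under chip firing, which is exactly the cancellation $\vert v\vert=2\cdot\#\{\text{loops at }v\}+\sum_{w\neq v}\#\{\text{edges between }v\text{ and }w\}$ that you verify explicitly. Your version just spells out the combinatorial bookkeeping that the paper leaves implicit.
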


\begin{proof}
The degree of a line bundle $\calL$ on $X$ is equal to the degree of $\mdeg\calL$ on $G$ and the claim follows from the invariance of the degree map on $\Div(G)$ under chip firing. 
\end{proof}

\begin{remark}
The combinatorics of chip firing has also appeared in algebraic geometry without the term being explicitly used, most notably in Caporaso's thesis~\cite{Cap94}. The explicit combinatorics and its relation to the component group of the N\'eron model of the Picard variety (see~\cite{Ray70}) can be found in \cite[Section 3]{CaporasoNeron} using the terminology of so-called ``twisters'' for extensions of the trivial line bundle in a semistable degeneration.
\end{remark}


\section{Combinatorial positivity and rank}\label{section_positivity&rank}

Let $G$ be a finite graph. A divisor $D=\sum_va_vv$ on $G$ is \emph{effective} (written as $D\geq 0$), if $a_v\geq 0$ for all vertices $v$ in $G$. The \emph{linear system} $\vert D\vert$ of $D$ is defined to be the set
\begin{equation*}
\vert D\vert =\big\{D'\geq 0\big\vert D'\sim D\big\} \ .
\end{equation*} 
Recall from \cite[Section 2]{BakerNorine_RiemannRoch} that the \emph{Baker-Norine rank} $r_G(D)$ of a divisor $D$ on $G$ is defined as the biggest integer $r\geq 0$ such that for all effective divisors $D'$ of degree $r$ the linear system $\vert D-D'\vert\neq \emptyset$. If this condition is not fulfilled for any $r\geq 0$, we set $r_G(D)=-1$. 

We now give an analogous collection of definitions for line bundles on a log curve $X$ as in the previous section (using the ideas in \cite{BakerNorine_RiemannRoch} and \cite{AminiBaker_metrizedcurvecomplexes} as inspiration). A line bundle $\calL$ on $X$ is said to be \emph{combinatorially effective} (written as $\calL\geq 0$), if the restriction $\pi^\ast\calL\vert_{\widetilde{X}_v}$ has non-negative rank for all $v$, i.e. if we have 
\begin{equation*}
r_{alg}\big(\pi^\ast\calL\vert_{\widetilde{X}_v}\big)=h^0\big(X_v,\pi^\ast\calL\vert_{\widetilde{X}_v}\big)-1\geq 0
\end{equation*}
for all $v$. This is equivalent to $\pi^\ast\calL\vert_{\widetilde{X}_v}=\calO_{\widetilde{X}_v}(D)$ for an effective divisor $D$ on $\widetilde{X}_v$.

\begin{definition}\label{definition_rank}
Let $\calL$ be a line bundle on $X$. Define the \emph{combinatorial linear system} $\vert \calL \vert$ of $\calL$ as the set
\begin{equation*}
\vert \calL\vert = \big\{\calL'\geq 0\big\vert [\calL']=[\calL] \big\} \ .
\end{equation*}
The \emph{combinatorial rank $r(\calL)=r_{comb}(\calL)$ of $\calL$} is defined as the greatest integer $r\geq 0$ such that $\vert\calL\otimes(\calL')^{-1}\vert\neq\emptyset$ for all line bundles $\calL'$ of degree $r$. If this condition is not fulfilled for any $r\geq 0$, we set $r(\calL)=-1$. 
\end{definition}

\begin{remark}
The terminology here is meant to parallel the terminology in \cite[Section 2.2]{AminiBaker_metrizedcurvecomplexes}. In fact, we will prove later (Corollary \ref{corollary_combinatorialrank=combinatorialrank}) that when both notions apply, they indeed agree with each other.
\end{remark}

It is immediate that, if two line bundles $\calL$ and $\calL'$ are combinatorially equivalent on $X$, then $\vert\calL\vert=\vert \calL'\vert$ as well as $r(\calL)=r(\calL')$, so that these depend only on the associated $M^{gp}_X$-torsor. 

\begin{proposition}\label{prop_specializationlemma}
Let $X$ be a logarithmic curve that is vertical and logarithmically semistable, and denote by $G$ its dual graph. Given a line bundle $\calL$ on $X$, we have
\begin{equation*}
r(\calL)\leq r_G\big(\tau(\calL)\big)  \ .
\end{equation*} 
\end{proposition}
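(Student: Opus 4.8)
The plan is to verify the defining inequality for the Baker--Norine rank directly, by lifting each relevant statement about $G$ to a statement about $X$ and then pushing it back down through $\tau$. Write $r = r(\calL)$ and $D = \mdeg(\calL) \in \Div(G)$, so that $\tau(\calL) = [D]$ in $\Pic(G)$. If $r = -1$ there is nothing to prove, since $r_G$ is always at least $-1$; so assume $r \ge 0$. By the definition of the Baker--Norine rank it then suffices to show that for every effective divisor $E$ on $G$ with $\deg E = r$ the linear system $|D - E|$ on $G$ is nonempty.

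So first I would fix such an $E = \sum_v a_v v$ and realize it as a multidegree on $X$. Since each $a_v \ge 0$, I can choose, for every vertex $v$, an effective divisor $D_v$ of degree $a_v$ supported on the smooth locus of the component $X_v$, and set $\calL' = \calO_X\big(\sum_v D_v\big)$. Then $\deg \pi^*\calL'|_{\widetilde{X}_v} = a_v$ for every $v$, so $\mdeg(\calL') = E$ as divisors and $\deg \calL' = \sum_v a_v = r$. Because $r(\calL) = r$, the defining property of the combinatorial rank (Definition~\ref{definition_rank}) applies to this $\calL'$ and guarantees that $|\calL \otimes (\calL')^{-1}| \ne \emptyset$: there is a combinatorially effective line bundle $\calM$ with $[\calM] = [\calL \otimes (\calL')^{-1}]$ in $\Pic^{log}(X)$.

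It then remains to transport these two facts across $\tau$. Combinatorial effectivity of $\calM$ means $\pi^*\calM|_{\widetilde{X}_v} = \calO_{\widetilde{X}_v}(F_v)$ for an effective divisor $F_v$ on each $\widetilde{X}_v$; in particular $\deg \pi^*\calM|_{\widetilde{X}_v} \ge 0$ for all $v$, so $\mdeg(\calM)$ is an effective divisor on $G$. On the other hand, since $[\calM] = [\calL \otimes (\calL')^{-1}]$ and $\tau$ is well defined on $\Pic^{log}(X)$, the divisors $\mdeg(\calM)$ and $\mdeg(\calL \otimes (\calL')^{-1}) = D - E$ are chip-firing equivalent on $G$. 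Hence $D - E \sim \mdeg(\calM) \ge 0$, so $|D - E| \ne \emptyset$, as required. As $E$ was an arbitrary effective divisor of degree $r$, this proves $r_G(\tau(\calL)) \ge r = r(\calL)$.

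The two routine-looking steps carry the entire content, and the place I would watch most carefully is the bookkeeping between divisors and their classes: I need $\mdeg(\calL') = E$ on the nose (not merely up to equivalence), so that $\mdeg(\calL \otimes (\calL')^{-1})$ equals $D - E$ exactly, and I need the well-definedness of $\tau$ to convert the equality $[\calM] = [\calL \otimes (\calL')^{-1}]$ in the logarithmic Picard group into a chip-firing equivalence of honest multidegrees. Both are available here --- the first by the explicit construction of $\calL'$, the second from the corollary establishing $\tau$ --- so I do not expect a genuine obstacle, only the need to keep the two notions of equivalence (combinatorial equivalence on $X$ and chip firing on $G$) carefully aligned.
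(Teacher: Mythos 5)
Your proposal is correct and follows essentially the same route as the paper: assume $r(\calL)=r\ge 0$, lift a test divisor of degree $r$ on $G$ to a line bundle $\calL'$ on $X$ with the prescribed multidegree, invoke the definition of the combinatorial rank to get a combinatorially effective $\calM$ equivalent to $\calL\otimes(\calL')^{-1}$, and push its multidegree back to $G$. Your explicit construction of the lift via effective divisors on the smooth locus of each component merely spells out the surjectivity of $\mdeg\colon\Pic^r(X)\to\Pic^r(G)$ that the paper asserts ``as easily checked.''
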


\noindent Propositions \ref{prop_maxdeg} and \ref{prop_Pic=Pic} below show that, if $X$ is maximally degenerate, the above inequality becomes an equality, i.e. we have
 \begin{equation*}
r(\calL)=r_G\big(\tau(\calL)\big) \ .
\end{equation*}
This, in particular, explains Example \ref{example_graphRR} in the introduction.

\begin{proof}[Proof of Proposition \ref{prop_specializationlemma}]
If $r(\calL)=-1$ there is nothing to show. So we may assume $r(\calL)\geq 0$. We are going to show that, if for $r\geq 0$ we have $r(\calL)\geq r$, then $r_G(D)\geq r$ for $D=\mdeg\calL\in \Div(G)$. Given a divisor $D'$ on $G$ of degree $r$, there is a line bundle $\calL'$ of degree $r$ on $X$ such that $\mdeg(\calL')=D'$, since $\mdeg\mathrel{\mathop:}\Pic^r(X)\rightarrow\Pic^r(G)$ is surjective (as easily checked - see also Proposition \ref{prop_Pic=Pic} below). Since $r(\calL)\geq r$, we have that $\vert \calL\otimes (\calL')^{-1}\vert\neq \emptyset$, i.e. there exists a combinatorially effective line bundle $\calL''$ on $X$ that is equivalent to $\calL\otimes (\calL')^{-1}$. Its image $D''=\mdeg(\calL'')$ is an effective divisor on $G$ that is equivalent to $D-D'$, so $\vert D-D'\vert\neq\emptyset$. This implies $r(D)\geq r$.
\end{proof}

Finally, in order to explain Example \ref{example_classicalRR} from the introduction, suppose that the underlying curve $\underline{X}$ of $X$ is already smooth. Then $X$ has only one component and Proposition \ref{prop_chipfiring} shows that the map 
\begin{equation*}
\Z^V=\Z\twoheadlongrightarrow \Pic(X)
\end{equation*}
is zero. Therefore the quotient above induces an isomorphism
\begin{equation*}
\Pic(X)\simeq\Pic^{\log}(X) \ .
\end{equation*}

\begin{proposition} Assume that $X$ is smooth, and let $\calL$ be a line bundle on $X$. Then we have 
\begin{equation*}
r_{comb}(\calL) = r_{alg}(\calL)=h^0 (X,\calL) - 1 \ .
\end{equation*}
\end{proposition}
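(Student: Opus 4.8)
\emph{The final Proposition asserts $r_{comb}(\calL) = r_{alg}(\calL) = h^0(X,\calL)-1$ for a line bundle $\calL$ on a log curve $X$ whose underlying curve is smooth.}

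The plan is to reduce the combinatorial rank to the rank of a complete linear system and then apply the standard dimension count for linear systems on a smooth projective curve. The key input is the identification $\Pic(X)\simeq\Pic^{log}(X)$ recorded just above: since $\underline X$ is smooth it has a single component, so for line bundles on $X$ combinatorial equivalence coincides with isomorphism, and a line bundle $\calM$ is combinatorially effective precisely when $h^0(X,\calM)\geq 1$, i.e.\ when $\calM$ is effective in the usual sense. In particular $|\calM|\neq\emptyset$ if and only if $h^0(X,\calM)\geq 1$. As $r_{alg}(\calL)=h^0(X,\calL)-1$ by definition, it remains to prove $r_{comb}(\calL)=h^0(X,\calL)-1$. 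Writing a combinatorially effective line bundle of degree $r$ as $\calO_X(E)$ for an effective divisor $E$ of degree $r$, and noting that $|\calL\otimes\calO_X(E)^{-1}|=|\calL(-E)|$ depends only on the class of $E$, the condition $r_{comb}(\calL)\geq r$ translates into the assertion that $h^0(X,\calL(-E))\geq 1$ for every effective divisor $E$ of degree $r$. The proposition is thus equivalent to the classical statement that this holds exactly when $h^0(X,\calL)\geq r+1$.

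For the bound $r_{comb}(\calL)\geq h^0(X,\calL)-1$, I would set $r=h^0(X,\calL)-1$ and take an arbitrary effective divisor $E$ of degree $r$. The subspace $H^0(X,\calL(-E))\subseteq H^0(X,\calL)$ of sections vanishing along $E$ is cut out by at most $\deg E=r$ linear conditions, so
\[
h^0(X,\calL(-E))\geq h^0(X,\calL)-r=1,
\]
whence $|\calL(-E)|\neq\emptyset$ for every such $E$, giving $r_{comb}(\calL)\geq r$.

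For the reverse bound I would produce, for $r=h^0(X,\calL)$, a single effective divisor $E$ of degree $r$ with $h^0(X,\calL(-E))=0$, so that the defining condition fails at level $r$ and hence $r_{comb}(\calL)<h^0(X,\calL)$. This is achieved by choosing points successively: whenever $\calM$ is a line bundle with $h^0(X,\calM)\geq 1$, the base locus of the nonempty linear system $|\calM|$ is a proper closed subset of $X$, so for a general point $p$ there is a section not vanishing at $p$, and then $h^0(X,\calM(-p))=h^0(X,\calM)-1$. Iterating this $h^0(X,\calL)$ times yields an effective divisor $E=p_1+\dots+p_{h^0(X,\calL)}$ with $h^0(X,\calL(-E))=0$; the degenerate case $h^0(X,\calL)=0$ is covered by $E=0$, which already gives $|\calL|=\emptyset$ and $r_{comb}(\calL)=-1$. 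Combining the two bounds yields $r_{comb}(\calL)=h^0(X,\calL)-1=r_{alg}(\calL)$.

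The genuinely nontrivial ingredient is this last construction. The lower bound is a one-line codimension estimate, but the upper bound requires exhibiting a single effective divisor that annihilates all of $H^0(X,\calL)$, which relies on the fact that a general point is not a base point of a nonempty linear system and therefore drops $h^0$ by exactly one. I expect the conceptual crux to be the translation between the combinatorial vocabulary and complete linear systems: it is valid here only because smoothness forces $\Pic(X)=\Pic^{log}(X)$ and thereby collapses each combinatorial linear system to its classical counterpart, and the curve-theoretic facts it reduces to are then completely standard.
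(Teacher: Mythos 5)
Your proof is correct and follows the same route as the paper: smoothness collapses $\Pic^{log}(X)$ to $\Pic(X)$, so combinatorial effectivity and combinatorial equivalence become the classical notions, and the claim reduces to the statement that the Baker--Norine-style rank of a divisor on a smooth curve equals $h^0-1$. The only difference is that the paper stops at that reduction and cites \cite[Lemma 2.4]{Baker_specialization}, whereas you prove the lemma inline (the codimension estimate for the lower bound, and the successive choice of non-base points for the upper bound); both halves are standard and correct over an algebraically closed field. One caveat: your translation of $r_{comb}(\calL)\geq r$ into a condition quantified over \emph{effective} divisors $E$ of degree $r$ tacitly reads Definition~\ref{definition_rank} as ranging only over combinatorially effective $\calL'$. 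That is surely the intended reading (it matches Baker--Norine and Amini--Baker, and the paper's own proof makes the same identification the moment it invokes $r(D)$), but under the literal wording ``all line bundles $\calL'$ of degree $r$'' the resulting rank would be $\deg\calL-g$ rather than $h^0(X,\calL)-1$, and the proposition would fail already for $\calL=\calO_X$ on a curve of positive genus; it would be worth making this quantifier explicit rather than silent.
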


\begin{proof}
Choose a divisor $D$ on $X$ such that $\calO(D)=\calL$. We have that $\vert\calL\otimes (\calL')^{-1}\vert\neq\emptyset$ for all line bundles $\calL'$ of degree $r$ on $X$ if and only if $\vert D-D'\vert\neq \emptyset$ for all divisors $D'$ of degree $r$ on $X$, i.e. if and only if $r(D)\geq r$. The claim then follows from \cite[Lemma 2.4]{Baker_specialization}. 
\end{proof}


\section{Metrized curve complexes and logarithmic curves}\label{section_metrizedcurvecomplexes}

We begin by recalling the definition of metrized curve complexes, as originally introduced in \cite{AminiBaker_metrizedcurvecomplexes}. In fact, inspired by the upcoming \cite{CavalieriChanUlirschWise_tropstack}, we provide a slight generalization of the definition in \cite{AminiBaker_metrizedcurvecomplexes}, where we allow the edge lengths to take values in arbitrary sharp monoids. 

\begin{definition}
Let $P$ be a sharp monoid (not necessarily fine and saturated). A metrized curve complex $\calC$ with edge lengths in $P$ over the field $k$ is given by the following data:
\begin{itemize}
\item a connected finite graph $G$ together with a function $d\mathrel{\mathop:}E(G)\rightarrow P$ that we call {\em edge length};
\item for each vertex $v$ of $G$, a pair $(C_{v},x_v)$ consisting of a complete nonsingular curve $C_v$ and a bijection $f\mapsto x_v^f$ from the half-edges $f$ emanating from $v$ to a finite subset $\calA_v=\{x_v^f\}$ of distinct closed points on $X_v$. 
\end{itemize}
\end{definition}

If $P$ is a submonoid of $\R_{\geq 0}$, this is precisely the notion defined in \cite{AminiBaker_metrizedcurvecomplexes}; in this case it is often useful to think of $\calC$ in terms of its \emph{geometric realization} $\vert \calC\vert$ given as the union of all $C_v$ and all edges $e$ (of length $d(e)$) where the endpoint of an edge $e$ at a vertex $v$ is identified with $x_v^e\in C_v$. 

We may associate to every metrized curve complex $\calC$ with values in $P$ its underlying weighted metric graph $\Gamma$ by considering the vertex-weight 
\begin{equation*}\begin{split}
h\mathrel{\mathop:}V(G)&\longrightarrow \Z_{\geq 0}\\
v&\longmapsto g(C_v) \ .
\end{split}\end{equation*}
The \emph{genus} of $\calC$ is defined to be the number 
\begin{equation*}
g(\calC)=b_1(G)+\sum_{v\in V}h(v) \, ,
\end{equation*}
where $b_1(G)=\#E(G)-\#V(G)+1$ is the Betti number of the graph $G$.

\begin{remark}
In addition to submonoids of the real numbers, another natural class of monoids to work with are \textit{valuated} monoids, for instance, the non-negative elements in the abelian group $\mathbb{R}^k$ for the lexicographic order. The metrized complexes thus obtained arise naturally from skeletons of Hahn analytic spaces, as introduced and studied in~\cite{FR15}. 
\end{remark}

\begin{definition}
An \emph{automorphism} $\phi$ of a metrized curve complex $\calC$ consists of the following data:
\begin{itemize}
\item an automorphism $\phi_\Gamma$ of the underlying graph $G$;
\item a collection of isomorphisms $\phi_v\mathrel{\mathop:}C_v\xrightarrow{\sim} C_{\phi_\Gamma(v)}$,
\end{itemize}
satsfying
\begin{itemize}
\item $d(e)=d\big(\phi_\Gamma(e)\big)$ for all edges $e$ of $G$;
\item $x^{\phi_{\Gamma}(f)}_{\phi_{\Gamma}(v)}=\phi_{v}(x^{f}_{v})$ for all half-edges $f$ at any vertex $v$ of $G$.
\end{itemize}
\end{definition}

A \emph{marking of $\calC$} is a choice of $n$ points $p_1,\ldots, p_n$ on $\bigsqcup_vC_v-\calA_v$. An automorphism $\phi$ of $(\calC,p_1,\ldots, p_n)$ is an automorphism $\phi$ of $\calC$ such that $\phi(p_i)=p_i$.  

Denote by $\calS_{g,n}$ the category fibered in groupoids of logarithmically smooth curves of genus $g$ with $n$ marked points, as introduced in \cite{Olsson_logtwistedcurves}. By \cite[Lemma 5.1]{Olsson_logtwistedcurves} $\calS_{g,n}$ is an algebraic stack. It contains an open substack $\calS_{g,n}^\circ$ parametrizing smooth $n$-marked curves and the complement of $\calS_{g,n}^{\circ}$ in $\calS_{g,n}$ has normal crossings, making $\calS_{g,n}$ into a logarithmically smooth Artin stack.

\begin{theorem}\label{thm_logcurve=mcc}
Let $P$ be a fine and saturated monoid that is sharp. Denote by $S=(\Spec k, k^\ast\oplus P)$ the logarithmic point with monoid $P$. There is a natural equivalence of groupoids  
\begin{equation*}
\calS_{g,n}(S)\xlongrightarrow{\sim}\left\{\begin{array}{c}
   \textrm{metrized curve complexes of genus } g \\ \textrm{with } n
    \textrm{ marked points } \textrm{and edge lengths in } P\\
  \end{array} \right\}. 
\end{equation*}
\end{theorem}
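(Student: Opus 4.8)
The plan is to produce two functors and exhibit them as mutually quasi-inverse equivalences of groupoids. Going from log curves to metrized curve complexes, I would start with a log smooth curve $X\rightarrow S$ and read off all of the combinatorial and algebraic data directly from Kato's local structure theorem recalled above. Concretely, the dual graph $G=G_X$ of the underlying nodal curve $\underline{X}$ supplies the finite graph; for each vertex $v$ the normalization $C_v:=\widetilde{X}_v$ is a complete nonsingular curve, and the preimages under $\pi_v$ of the nodes lying on $X_v$ furnish the bijection $f\mapsto x_v^f$ between half-edges at $v$ and a finite set $\calA_v$ of distinct closed points of $C_v$. The $n$ marked points of $X$ (the type (ii) points) give the marking $p_1,\dots,p_n$. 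Finally, for each edge $e$ corresponding to a node $x_e$, the local characteristic monoid $\Mbar_{X,x_e}=\N^2\oplus_{\Delta,\N,p_e}P$ exhibits a distinguished element $p_e\in P$, and I would set the edge length $d(e):=p_e$. The genus is preserved because $g=b_1(G)+\sum_v g(C_v)$ is exactly $g(\calC)$. Since every isomorphism of log curves over $S$ induces an isomorphism of dual graphs compatible with the normalizations, the marked points, and the elements $p_e$, this assignment is a functor of groupoids.

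For the inverse, given a metrized curve complex $\calC$ I would first build the underlying curve $\underline{X}$ by gluing the curves $C_v$ transversally along the points $x_v^f$ dictated by the edges of $G$, producing a nodal curve of arithmetic genus $g$ with $n$ marked points. The substantive step is to equip $\underline{X}$ with a log structure $M_X$ making it a log smooth curve over $S$ whose associated metrized curve complex is $\calC$. Away from the nodes and marked points this log structure is the pullback of $M_S$, so that $\Mbar_X=P$; at each marked point one adjoins a free generator to obtain $\Mbar_X=\N\oplus P$; and in an \'etale neighborhood of a node $x_e$, modelled on $\Spec k[x,y]/(xy)$, one prescribes the log structure with characteristic $\N^2\oplus_{\Delta,\N,p_e}P$, sending the two generators of $\N^2$ to the branch coordinates $x,y$ and using the edge length $d(e)=p_e$ to define the map $\N\to P$ in the amalgamated sum. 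These local log structures agree with the pullback of $M_S$ on the punctured neighborhoods where they overlap, so they glue to a global log structure on $\underline{X}$, and Kato's criterion shows the resulting $X\rightarrow S$ is log smooth with precisely the prescribed local characteristics.

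I would then check that the two functors are mutually quasi-inverse. By construction the forward functor recovers the graph, the curves $C_v$, the marked points, and the edge lengths from a log curve built by the inverse functor, giving $F\circ G\cong\id$; conversely, because a log smooth curve over $S$ is determined up to isomorphism by its underlying nodal curve together with the \'etale-local characteristic monoids at its nodes (which carry exactly the data of the $p_e$), reconstructing and re-gluing returns a canonically isomorphic log curve, giving $G\circ F\cong\id$. The same local rigidity shows that isomorphisms correspond bijectively on both sides, so the functors are fully faithful, and hence define the desired equivalence of groupoids.

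The main obstacle is the construction of the log structure in the inverse functor together with the verification of log smoothness: one must show that the prescribed local models at the nodes, built from the amalgamated sums $\N^2\oplus_{\Delta,\N,p_e}P$, genuinely glue to a fine and saturated log structure over $S$ with log smooth total space, and that the resulting object is essentially unique given $\calC$. This is exactly the content that makes the inverse functor well defined and faithful. An alternative, more structural route would be to invoke Olsson's description of $\calS_{g,n}$ as a log algebraic stack whose characteristic monoid at a point parametrizing a curve with $m$ nodes is $\N^m$; then a log smooth curve over $S$ corresponds to a nodal curve over $k$ together with a monoid homomorphism $\N^{E(G)}\to P$, which unwinds directly to the edge-length function $d\mathrel{\mathop:}E(G)\rightarrow P$ of a metrized curve complex.
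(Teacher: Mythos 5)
Your proposal is correct and follows the same overall strategy as the paper: the forward functor (dual graph, normalizations $\widetilde{X}_v$ as the curves $C_v$, preimages of nodes as the points $x_v^f$, edge lengths read off from $\Mbar_{X,x_e}=\N^2\oplus_{\Delta,\N,p_e}P$) is identical. The one place you diverge is the construction of the log structure in the inverse functor. You build it by prescribing \'etale-local models at the nodes and gluing, which forces you to confront (as you rightly flag) the consistency of the local charts and the essential uniqueness of the result: the choice of branch coordinates at each node is not canonical, so the gluing and the independence of choices require an argument you have not supplied. The paper sidesteps this entirely: it equips the glued nodal curve with its minimal (basic) log structure $M'$ over $S'=\big(\Spec k, k^\ast\oplus\N^{E(G_\calC)}\big)$ --- which exists and is unique up to unique isomorphism --- and then defines $M_X$ as the pullback along the map $S\to S'$ induced by $\vec{e}\mapsto p_e$. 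This is precisely the ``alternative, more structural route'' you mention in your last paragraph, and it is the cleaner of the two, because the only datum beyond the underlying marked nodal curve is the monoid homomorphism $\N^{E(G)}\to P$, which is literally the edge-length function of the metrized curve complex. If you pursue the hand-gluing route you should either prove independence of the local coordinate choices or reduce to the minimal log structure anyway; with that caveat, your argument matches the paper's level of detail on the remaining points (mutual inverseness and the correspondence of automorphisms, which both you and the paper treat by direct inspection).
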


\begin{proof}
To a logarithmically smooth curve $X$ over $S$ we associate a metrized curve complex $\calC(X)$ with edge lengths as follows:
\begin{itemize}
\item Its underlying graph $G_X$ is the dual graph of $X$. A vertex $v$ of $G_X$ corresponds to an irreducible component $X_v$ of $X$, and $G_X$ contains an edge $e$ between two vertices $v$ and $w$ for every node $x_e$ that is contained in the components $X_v$ and $X_w$.
\item The length of an edge $e$ is the element $p_e\in P$ given by $1\mapsto p_e$ in the morphism $\N\rightarrow P$ determining the second summand in
\begin{equation*}
\Mbar_{X,x_e}=\N^2\oplus_\N P \ . 
\end{equation*}
\item To a vertex $v$ in $G_X$ we associate the normalization $\widetilde{X}_v$ of the corresponding irreducible component. If  $e$ is a not a loop we associate to $e$ the unique preimage of the node $x_e$ in $\widetilde{X}_v$. Otherwise, if $e$ is a loop, it consists of two half-edges $f_1$ and $f_2$ emanating from $v$, to which we associate  the two preimages $x_{f_i,v}$ (for $i=1,2$) of $x_e$ in $\widetilde{C}_v$. 
\item Finally, each marked point $x_{l_i}$ in $X$ becomes a marked point on the normalization $\widetilde{X}_v$ of the component $X_v$ that $x_{l_i}$ is contained in.
\end{itemize}

Conversely, we may associate to a metrized curve complex $\calC$ with edge lengths in $P$ a logarithmically smooth curve $X_\calC$ as explained below:
\begin{itemize}
\item The underlying algebraic curve $\underline{X_\calC}$ is a nodal configuration given by glueing every $C_v$ to $C_w$ along a node at the points $x_{f,v}$ and $x_{f',w}$, whenever the half edges $f$ and $f'$ form an edge in $G_\calC$. 
\item The curve $\underline{X_\calC}$ carries a minimal logarithmic structure $M'$ over the standard logarithmic point $S'=\big(\Spec k,k^\ast\oplus \N^{ E(G_\calC)}\big)$, which also encodes the marked points $x_1,\ldots, x_n$. The monoid homomorphism
\begin{equation*}\begin{split}
\N^{ E(G_\calC)}&\longrightarrow P \\
\vec{e}&\longmapsto p_e
\end{split}\end{equation*}
induces a map $S\rightarrow S'$, and we define the logarithmic structure $M_X$ on $X_\calC$ as the pullback of $M_X'$ along this map.  
\end{itemize}

The two associations $X\mapsto \calC(X)$ and $\calC\mapsto X_\calC$ are mutually inverse (up to isomorphism). An automorphism $\phi$ of a logarithmic curve $X$ over $S$ naturally induces an automorphism $\phi_G$ of the weighted dual graph as well as isomorphisms $\widetilde{X}_v\xrightarrow{\sim} \widetilde{X}_{f(v)}$ for every vertex of $G$. Moreover, the automorphism $f$ induces an isomorphism 
\begin{equation*}
\N^2\oplus_{\Delta,\N,p_e}P= \Mbar_{X,x_e}\xrightarrow{\sim}\Mbar_{X,\phi(x_e)}=\N^2\oplus_{\Delta,\N,p_{\phi_G(e)}}P
\end{equation*}
for every node $x_e$ of $X$ and this implies that $d\big(\phi_G(e)\big)=p_e=d(e)$ for every edge $e$ of $G$. This shows that $\phi$ induces a natural automorphism $\phi_{\calC(X)}$ of $\calC(X)$. If $X$ has marked points $\phi$ preserves their marking and so does the induced automorphism $\phi_{\calC(X)}$. The association $\phi\mapsto \phi_{\calC(X)}$ is clearly functorial in $\phi$ and every automorphism of $\calC(X)$ uniquely determines an automorphism of $X$. This finishes the proof of Theorem \ref{thm_logcurve=mcc}.
\end{proof}


\section{Comparing Picard groups}\label{section_proof}
Let us recall from \cite{AminiBaker_metrizedcurvecomplexes} the theory of ranks of divisors on metrized curve complexes. For simplicity we restrict our attention from now on to a metrized curve complex $\calC$ with edge lengths in $\N$, all equal to one. 

A \emph{divisor} $D$ on $\calC$ is a tuple $\big(D_G, (D_v)_{v\in V(G)}\big)$ consisting of a divisor $D_G=\sum_v a_v v$ on $G$ as well as divisors $D_v$ on every component $C_v$ of $\calC$, such that $\deg D_v=a_v$ for all vertices $v$ of $G$. Note that because of the last equality, $D_G$ is actually determined by the family of divisors $(D_v)_{v\in V(G)}$. Denote by $\Div(\calC)$ the abelian group of divisors on $\calC$ and by $\deg D=\deg D_G$ the \emph{degree} of $D$. Generalizing the chip firing operations on $G$, there is an equivalence relation on $\Div(\calC)$ generated by the following two operations:
\begin{enumerate}
\item For every vertex $v$ of $G$, we may replace $D_v$ by a divisor $D_v'$ that is equivalent to $D_v$ on the smooth curve $C_v$.
\item Given a vertex $v$, we may apply a chip firing move to transform $D_G$ into $D_G'$ (see the discussion in Section \ref{section_logPic}), in which case we obtain
\begin{equation*}
D_v'=D_v-\sum_{\textrm{half edges } f \textrm{ at } v}  x_{f,v},
\end{equation*}
as well as 
\begin{equation*}
D_w'=D_w+\sum_{\textrm{edges } e \textrm{ between } v \textrm{ and } w} x_{e,v}
\end{equation*}
for consistency.
\end{enumerate}

The quotient of $\Div(\calC)$ by the above relations is called the \emph{Picard group} of $\calC$, and will be denoted by $\Pic(\calC)$. A divisor $D$ on $\calC$ is said to be \emph{effective} (written as $D\geq 0$) if both $D_G$ and all $D_v$ are effective. Denote by 
\begin{equation*}
\vert D\vert =\big\{D'\geq 0\big\vert D'\sim D\big\}
\end{equation*}
the linear system associated to $D$. The \emph{rank} $r_\calC(D)$ of $D$ is the greatest integer $r\geq 0$ such that $\vert D-D'\vert\neq\emptyset$ for all divisors $D'$ on $\calC$ of degree $r$. If this condition is not fulfilled for any $r\geq 0$, we set $r_\calC(D)=-1$. 

It is well-known (see e.g. \cite{AminiBaker_metrizedcurvecomplexes}), that the inequality $r_\calC(D)\leq r_G(D_G)$ holds. The following Proposition \ref{prop_maxdeg} is a partial converse to this inequality; it is used in the proof of Proposition \ref{prop_specializationlemma} above. 

\begin{proposition}\label{prop_maxdeg}
If $\calC$ is maximally degenerate, i.e. if all components $C_v$ are projective lines, we have 
\begin{equation*}
r_\calC(D)=r_G(D_G) 
\end{equation*}
for a divisor $D=\big(D_G,(D_v)_{v\in V(G)}\big)$ on $\calC$. 
\end{proposition}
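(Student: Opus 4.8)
Since the inequality $r_\calC(D)\leq r_G(D_G)$ is already known, the plan is to establish only the reverse inequality $r_\calC(D)\geq r_G(D_G)$; equality then follows immediately. If $r_G(D_G)=-1$ there is nothing to prove, because the rank is always at least $-1$ and the known inequality forces $r_\calC(D)=-1$. So I would set $r := r_G(D_G)\geq 0$ and aim to show that $\vert D-E\vert\neq\emptyset$ for every effective divisor $E$ on $\calC$ of degree $r$, which is exactly the assertion $r_\calC(D)\geq r$.

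The key structural observation, and the engine of the whole argument, is that on each component $C_v\cong\PP^1$ linear equivalence is detected purely by degree: any divisor on $\PP^1$ of non-negative degree is linearly equivalent to an effective one. Consequently the ``component part'' of a divisor on $\calC$ never obstructs effectivity, and the only genuine constraint is the graph-theoretic one recorded by $r_G$. This is what will allow me to reduce effectivity on $\calC$ to effectivity on $G$, so that no Brill--Noether input on the components is needed.

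Carrying this out, fix an effective $E=\big(E_G,(E_v)\big)$ of degree $r$. Then $E_G$ is an effective divisor on $G$ of degree $r$, so the hypothesis $r_G(D_G)=r$ gives $\vert D_G-E_G\vert\neq\emptyset$, i.e. there is an effective $F_G$ on $G$ with $F_G\sim D_G-E_G$ via chip firing. I would then lift this chain of chip-firing moves to $\calC$ using operation (2): applying the corresponding moves to $D-E$ produces an equivalent divisor whose graph part is $F_G$ and whose component divisors $D_v''$ satisfy $\deg D_v''=(F_G)_v\geq 0$, since the component divisors change only by adding and subtracting the honest node points $x_{e,v}$ and hence remain genuine divisors of the prescribed degrees. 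Finally, because each $C_v\cong\PP^1$ and $\deg D_v''\geq 0$, operation (1) lets me replace each $D_v''$ by a linearly equivalent effective divisor on $C_v$. The resulting divisor has effective graph part $F_G$ and effective component parts, hence is effective and equivalent to $D-E$, so $\vert D-E\vert\neq\emptyset$, as desired.

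The only step requiring care, and the closest thing to an obstacle, is the bookkeeping in lifting the chip-firing moves: I must verify that operation (2) on $\calC$ projects to the chip-firing move on $G$ and that it preserves the compatibility $\deg D_v=(D_G)_v$ throughout, so that after the full chain the component degrees are exactly the coefficients of the effective divisor $F_G$. Once this compatibility is confirmed, the $\PP^1$ fact does all of the remaining work and the proof concludes.
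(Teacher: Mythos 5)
Your proposal is correct and follows essentially the same route as the paper: both reduce to showing $r_G(D_G)\geq r$ implies $r_\calC(D)\geq r$, pull the graph-theoretic equivalence $D_G-E_G\sim F_G\geq 0$ back to $\calC$, and use $\Pic(\PP^1)=\Z$ (so that non-negative degree on each $C_v$ forces equivalence to an effective divisor) to conclude. The only cosmetic difference is that you perform the lifted chip-firing moves and then fix up each component via operation (1), whereas the paper directly chooses an arbitrary effective lift of $F_G$ and observes it is automatically equivalent to $D-D'$; these are the same argument.
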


\begin{proof}
We are going to show that $r_G(D_G)\geq r$ implies $r_\calC(D)\geq r$. The case $r=-1$ is trivial, so we may assume $r\geq 0$. Let $D'$ be a divisor on $\calC$ of degree $r$. Since $\vert D_G-D_G'\vert\neq\emptyset$, there is an effective divisor $D_G''=\sum_va_v'' v$ on $G$ that is equivalent to $D_G-D_G'$. We may lift $D_G''$ to a divisor $D$ on $\calC$ by choosing any effective $D_v''$ in the linear system $\vert \calO_{C_v}(a_v'')\vert$ on $C_v$. As $\Pic(C_v)=\Z$, this divisor has to be equivalent to $D_v-D_v'$ on $C_v$, and so we have $\vert D-D'\vert\neq\emptyset$. This implies $r_\calC(D)\geq r$.   
\end{proof}

\begin{remark}\label{remark_rankdeterminingsets}
Amini and Baker develop their theory in \cite{AminiBaker_metrizedcurvecomplexes} for arbitrary real edge lengths. As explained in \cite[Section 2.1]{AminiBaker_metrizedcurvecomplexes} the resulting rank is equivalent to one just presented. This can be seen using the theory of \emph{rank determining sets} for metrized complexes, which can be found in \cite[Appendix A]{AminiBaker_metrizedcurvecomplexes}, and is based on earlier ideas in \cite{HladkyKralNorine_ranksofdivisors} and \cite{Luo_rankdeterminingsets} for tropical curves and metric graphs.
\end{remark}

\begin{proposition}\label{prop_Pic=Pic}
Let $X$ be a logarithmically smooth curve over $S=(\Spec k,k^\ast\oplus \N)$ that is logarithmically semistable and vertical. Denote by $\calC=\calC(X)$ the corresponding metrized curve complex. 
\begin{enumerate}[(i)]
\item There is natural short exact sequence
\begin{equation*}\begin{CD}
0@>>> (k^\ast)^{b_1(G)}@>>>\Pic^{\log}(X)@>>> \Pic(\calC)@>>> 0 \ ,
\end{CD}\end{equation*} 
where $b_1(G)=\#E-\#V+1$ is the Betti number of the dual graph $G=G_X$ of $X$, i.e. $\Pic^{log}(X)$ is an extension of $\Pic(\calC)$ by the split algebraic torus $(k^\ast)^{b_1(G)}$. 
\item The epimorphism $\Pic^{\log}(X)\twoheadrightarrow \Pic(\calC)$ preserves degrees and ranks, i.e. given a line bundle $\calL$ on $X$, whose class $[\calL]$ in $\Pic^{log}(X)$ maps to the equivalence class $[D]$ of a divisor $D$ on $\calC_X$, we have 
\begin{equation*}
\deg (\calL)=\deg(D), \qquad \textrm{as well as}\qquad r_{comb}(\calL)=r_{\calC}(D) \ .
\end{equation*} 
\end{enumerate}
\end{proposition}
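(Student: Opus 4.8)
The plan is to realize both groups as quotients of objects linked by the normalization of $X$, and then to extract everything from a single diagram chase. For part (i) I would start from the normalization $\pi\colon\widetilde X\to X$, noting that $\widetilde X=\bigsqcup_v\widetilde X_v$ is a disjoint union of smooth connected curves, so $\Pic(\widetilde X)=\prod_v\Pic(\widetilde X_v)$. The short exact sequence of \'etale sheaves $1\to\calO_X^\ast\to\pi_\ast\calO_{\widetilde X}^\ast\to\mathcal{Q}\to 1$, with $\mathcal{Q}$ a skyscraper supported at the nodes with stalk $k^\ast$, gives on cohomology
\begin{equation*}
1\to k^\ast\to (k^\ast)^{\#V}\xrightarrow{\ \delta\ }(k^\ast)^{\#E}\to \Pic(X)\xrightarrow{\ \pi^\ast\ }\textstyle\prod_v\Pic(\widetilde X_v)\to 0 .
\end{equation*}
Since $\delta$ is the graph coboundary and $G$ is connected, $\operatorname{coker}\delta\cong(k^\ast)^{b_1(G)}$, so $\pi^\ast$ is surjective with kernel the torus $T:=(k^\ast)^{b_1(G)}$. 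Next I would identify both Picard groups as quotients by the \emph{same} relations: by definition $\Pic^{\log}(X)=\Pic(X)/K$ with $K=\langle\calL_v\rangle$, while $\Div(\calC)\cong\bigoplus_v\Div(\widetilde X_v)$ (the graph part being determined by degrees), relation (1) passes to $\prod_v\Pic(\widetilde X_v)$, and by Proposition \ref{prop_chipfiring} the divisorial chip-firing relation (2) at $v$ is exactly translation by $\pi^\ast\calL_v$. Hence $\Pic(\calC)=\prod_v\Pic(\widetilde X_v)/\pi^\ast(K)$, and $\pi^\ast$ descends to a surjection $\overline{\pi^\ast}\colon\Pic^{\log}(X)\to\Pic(\calC)$.

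The core of part (i) is then the computation of $\ker\overline{\pi^\ast}$. A class $[\calL]$ dies precisely when $\pi^\ast\calL\in\pi^\ast(K)$, i.e. when $\calL$ differs from an element of $K$ by something in $\ker\pi^\ast=T$, so $\ker\overline{\pi^\ast}$ is the image of $T$ in $\Pic^{\log}(X)$. To see this image is again a torus $(k^\ast)^{b_1(G)}$, I must check $T\cap K=0$ in $\Pic(X)$. Here Proposition \ref{prop_chipfiring} does the work: if $\kappa=\sum n_v\calL_v\in T$ its multidegree vanishes, so $(n_v)$ lies in the kernel of the graph Laplacian, which for connected $G$ is the integral all-ones lattice $\Z\cdot(1,\dots,1)$; since $\bigotimes_v\calL_v\cong\calO_X$, this forces $\kappa=0$. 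This yields the exact sequence of (i). The bookkeeping obstacle here is matching relation (2) with $\pi^\ast\calL_v$ at loops (one must verify the two loop half-edge contributions cancel, in agreement with the multidegree computation of Section \ref{section_logPic}) and confirming the Laplacian kernel is exactly $\Z\cdot(1,\dots,1)$ integrally.

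For part (ii) degrees are immediate, $\deg(\calL)=\sum_v\deg\pi^\ast\calL|_{\widetilde X_v}=\sum_v\deg D_v=\deg D$, in line with Corollary \ref{corollary_degree}. For ranks the key lemma I would isolate is: for any line bundle $\mathcal M$ on $X$ with $\overline{\pi^\ast}[\mathcal M]=[E]$ in $\Pic(\calC)$, the class $[\mathcal M]$ has a combinatorially effective representative if and only if $[E]$ has an effective representative. The forward direction is routine, as an effective divisor in each $\pi^\ast\mathcal M'|_{\widetilde X_v}$ assembles into an effective divisor on $\calC$. The converse is the main obstacle of the entire proof: an effective $E'\sim E$ does lift to a combinatorially effective $\mathcal M'$ on $X$, but a priori only with $\overline{\pi^\ast}[\mathcal M']=[E]$, so $[\mathcal M']$ and $[\mathcal M]$ may differ by a torus element $\hat t\in T=\ker\overline{\pi^\ast}$. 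The point to exploit is that $\hat t\in\ker\pi^\ast$ has trivial restriction to every $\widetilde X_v$; thus twisting by $\hat t^{-1}$ alters neither the restrictions $\pi^\ast(\cdot)|_{\widetilde X_v}$ nor combinatorial effectivity, and $\mathcal M'\otimes\hat t^{-1}$ is a combinatorially effective representative of $[\mathcal M]$ itself. In short, combinatorial effectivity is constant along the torus fibers of $\overline{\pi^\ast}$.

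Finally I would deduce $r_{comb}(\calL)=r_\calC(D)$ from the key lemma by matching test objects. As $\overline{\pi^\ast}$ is a degree-preserving surjection, degree-$r$ line bundles $\calL'$ on $X$ correspond in both directions to degree-$r$ divisors $D'$ on $\calC$ via $[\calL']\mapsto[D']$, and $\overline{\pi^\ast}[\calL\otimes(\calL')^{-1}]=[D-D']$. Applying the key lemma to $\mathcal M=\calL\otimes(\calL')^{-1}$ gives $\vert\calL\otimes(\calL')^{-1}\vert\neq\emptyset\iff\vert D-D'\vert\neq\emptyset$; quantifying over all tests of degree $r$ yields $r_{comb}(\calL)\geq r\iff r_\calC(D)\geq r$ for every $r\geq 0$, hence the desired equality, with the $r=-1$ case handled by the convention.
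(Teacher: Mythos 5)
Your proposal is correct and follows essentially the same route as the paper: the normalization sequence $1\to\calO_X^\ast\to\pi_\ast\calO_{\widetilde X}^\ast\to\mathcal Q\to 1$ identifying $\ker\pi^\ast\cong(k^\ast)^{b_1(G)}$, the identification of $\Pic(\calC)$ as the quotient of $\Pic(\widetilde X)$ by the images of the $\calL_v$, and, for the ranks, lifting an effective divisor on $\calC$ to a combinatorially effective line bundle and correcting by the torus to land in the right class (the paper's ``careful choice of the gluing data''). Your direct kernel computation replaces the paper's Nine-Lemma diagram, and you make explicit two points the paper leaves implicit (that $K\cap\ker\pi^\ast=0$ via the Laplacian kernel, and the precise torus-twisting step), but the mathematical content is the same.
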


\begin{proof}
The short exact sequence 
\begin{equation*}\begin{CD}
0@>>>\calO_X^\ast@>>>\pi_\ast \calO_{\widetilde{X}}^\ast @>>> \bigoplus_{e\in E(G)}k^\ast_{x_e}@>>> 0
\end{CD}\end{equation*}
induces a long exact sequence
\begin{equation*}\begin{CD}
0@>>>k^\ast @>>> (k^\ast)^{V(G)} @>>> (k^\ast)^{E(G)} @>>> H^1({X},\calO_X^\ast)@>>> H^1({X},\pi_\ast \calO_{\widetilde{X}}^\ast) @>>> 0 \ ,
\end{CD}\end{equation*}
which in turn gives rise to a natural short exact sequence
\begin{equation*}\begin{CD}
0@>>> (k^\ast)^{b_1(G)}@>>> \Pic(X) @>>> \Pic(\widetilde{X})@>>> 0.
\end{CD}\end{equation*}
This shows that $\Pic(X)$ is an extension of $\Pic(\widetilde{X})$ by the split algebraic torus $(k^\ast)^{b_1(G)}$. 

Note that $\Div(\widetilde{X})=\Div(\calC)$. By Propositon \ref{prop_chipfiring}, the Picard group $\Pic(\calC)$ of $\calC$ naturally arises as the quotient of $\Pic(\widetilde{X})$ by the chip firing relations that are induced by the composition 
\begin{equation*}\begin{CD}
\Z^V@>>> \Pic(X) @>>>\Pic(\widetilde{X}) \ .
\end{CD}\end{equation*}
Thus we have a natural short exact sequence
\begin{equation*}\begin{CD}
0@>>>\Z^{\#V-1}@>>>\Pic(\widetilde{X})@>>>\Pic(\calC)@>>> 0,
\end{CD}\end{equation*}
which fits into a commutative diagram
\begin{equation*}\begin{CD}
@. 0 @. 0 @. 0 @.\\
@. @| @VVV @VVV @.\\
0@= 0@>>> \Z^{\#V-1} @=\Z^{\#V-1} @>>> 0\\
@. @VVV @VVV @VVV @.\\
0@>>> (k^\ast)^{b_1(G)}@>>> \Pic(X) @>>>\Pic(\widetilde{X})@>>> 0\\
@. @| @VVV @VVV @.\\
0@>>> (k^\ast)^{b_1(G)} @>>> \Pic^{log}(X) @>>> \Pic(\calC) @>>> 0\\
@. @VVV @VVV @VVV @.\\
 @. 0 @. 0 @. 0. @. 
\end{CD}\end{equation*}
All vertical and the upper two horizontal sequences in this diagram are exact. By the Nine-Lemma, the lower horizontal sequence is therefore also short exact. This proves part (i). 

For part (ii), note first that the epimorphism $\Pic(X)\rightarrow\Pic(\widetilde{X})$ preserves degrees and this property descends to $\Pic^{log}(X)\rightarrow\Pic(\calC)$ by Corollary \ref{corollary_degree} above. In order to prove the second part we are going to show that $r(\calL)\geq r$ if and only if $r_\calC(D)\geq r$ for all $r\geq -1$. If $r=-1$ this statement is trivial; so we may assume $r\geq 0$. 

Suppose that $r(\calL)\geq r$. Let $D'$ be a divisor on $\calC$ of degree $r$. Choose a lift $\calL'$ of $D'$, i.e. a line bundle $\calL'$ in $\Pic^r(X)$ on $X$ such that $[\calL']$ maps to $[D]$ in $\Pic^r(\calC)$. Since $\vert \calL\otimes(\calL')^{-1}\vert\neq \emptyset$, there is a combinatorially effective line bundle $\calL''$ on $X$ that is equivalent to $\calL\otimes(\calL')^{-1}$. We may therefore write $\pi^\ast\calL''=\calO_{\widetilde{X}}(D'')$ for a divisor $D''\in \Div(\widetilde{X})=\Div(\calC)$ that is effective and equivalent to $D-D'$. This shows that $\vert D-D'\vert\neq\emptyset$. Hence $r_\calC(D)\geq r$. 

Conversely, assume that $r_\calC(D)\geq r$. Let $\calL'$ be a line bundle on $X$ of degree $r$. Choose a divisor $D'\in\Div(\calC)=\Div(\widetilde{X})$ of degree $r$ such that $\calL'=\calO_{\widetilde{X}}(D')$ and the support of $D'$ does not contain the nodes of $X$. Since $\vert D-D'\vert \neq\emptyset$, there is an effective divisor $D''$ on $\calC$ that is equivalent to $D-D'$. We may assume that the support of $D''$ does not contain any of the points $x_v^f$. With a careful choice of the ``gluing data'' in $(k^\ast)^{b_1(G)}$, the line bundle $\calO_{\widetilde{X}}(D'')$ descends to a combinatorially effective line bundle $\calL''$ on $X$ that is equivalent to $\calL\otimes (\calL')^{-1}$ and therefore $\vert \calL\otimes(\calL')^{-1}\vert \neq \emptyset$. This shows that $r(\calL)\geq r$. 
\end{proof}

\begin{corollary}\label{corollary_combinatorialrank=combinatorialrank}
Suppose that the components $X_v$ of $X$ are all smooth (i.e. they have no self-intersection). Then the combinatorial rank $r(\calL)$ of a line bundle $\calL$ on $X$ is equal to the combinatorial rank of $\calL$ defined in \cite[Section 2.2]{AminiBaker_metrizedcurvecomplexes}
\end{corollary}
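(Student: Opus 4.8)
The plan is to reduce everything to Proposition~\ref{prop_Pic=Pic}, which has already done the essential work. By part~(ii) of that proposition, the combinatorial rank $r(\calL)$ defined here via the logarithmic Picard group equals the rank $r_\calC(D)$ of the divisor $D$ on the metrized curve complex $\calC=\calC(X)$, where $[D]$ is the image of $[\calL]$ under the epimorphism $\Pic^{\log}(X)\twoheadrightarrow\Pic(\calC)$. It therefore suffices to identify $r_\calC(D)$ with the combinatorial rank of $\calL$ in the sense of Amini and Baker \cite[Section~2.2]{AminiBaker_metrizedcurvecomplexes}.

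First I would use the smoothness hypothesis to pin down the metrized complex. Since no component $X_v$ has a self-intersection, the normalization restricts to an isomorphism $\widetilde{X}_v\xrightarrow{\sim}X_v$ on each component, the dual graph $G$ has no loops, and every node has two distinct preimages on two distinct components. Hence the metrized curve complex $\calC(X)$ produced by Theorem~\ref{thm_logcurve=mcc} has smooth components $C_v=X_v$, its marked points are the nodes, and all its edge lengths equal one; this is precisely the metrized curve complex that Amini and Baker attach to the nodal curve $\underline{X}$. I would then trace the image of $[\calL]$ in $\Pic(\calC)$: following the proof of Proposition~\ref{prop_Pic=Pic}, this map is induced by restriction to the normalization together with the identification $\Div(\widetilde{X})=\Div(\calC)$, so that writing $\pi^\ast\calL=\calO_{\widetilde{X}}(D)$ with each $D_v$ representing $\pi^\ast\calL\vert_{\widetilde{X}_v}$, the class $[D]$ is the image of $[\calL]$. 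Under the identification $\widetilde{X}_v=X_v$ this is exactly the divisor on $\calC$ that Amini and Baker associate to $\calL$, since their construction likewise selects, on each component, a divisor representing the restriction $\calL\vert_{X_v}$. As the combinatorial rank of $\calL$ in their sense is by definition the rank $r_\calC(D)$ of this associated divisor, combining with Proposition~\ref{prop_Pic=Pic}(ii) yields $r(\calL)=r_\calC(D)$ equal to the Amini--Baker combinatorial rank, as desired.

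The step I expect to be the main obstacle is verifying that the divisor $D$ obtained through the logarithmic Picard group — passing through the chip-firing quotient and the gluing torsor $(k^\ast)^{b_1(G)}$ appearing in Proposition~\ref{prop_Pic=Pic} — represents the same class in $\Pic(\calC)$ as the divisor Amini and Baker extract directly from $\calL$, with the marked-point data at the nodes matched on both sides. This amounts to careful bookkeeping rather than new mathematics: both associations send $\calL$ to a divisor whose restriction to each smooth component $X_v$ represents $\calL\vert_{X_v}$, and the genuine comparison of ranks has already been carried out in Proposition~\ref{prop_Pic=Pic}(ii). Beyond the definitions of the Amini--Baker metrized complex and of the divisor it associates to a line bundle, I would not expect the argument to require any further input from \cite{AminiBaker_metrizedcurvecomplexes}.
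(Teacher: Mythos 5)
Your overall strategy coincides with the paper's: reduce everything to Proposition~\ref{prop_Pic=Pic}(ii), which identifies $r(\calL)$ with $r_\calC(D)$ for the image divisor class $D$ on $\calC=\calC(X)$. That half of the argument is fine, as is your observation that when the components have no self-intersections the metrized curve complex $\calC(X)$ is exactly the one Amini and Baker attach to the nodal curve $\underline{X}$, with the divisor class of $\calL$ matching on both sides.

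The gap is in your final step, where you assert that the Amini--Baker combinatorial rank of $\calL$ is ``by definition'' the rank $r_\calC(D)$ of the associated divisor on the metrized complex, and that no input from \cite{AminiBaker_metrizedcurvecomplexes} beyond definitions is required. In \cite[Section 2.2]{AminiBaker_metrizedcurvecomplexes} the combinatorial rank of a line bundle on a semistable curve with smooth components is defined intrinsically on the curve itself --- in terms of combinatorial equivalence of line bundles and effective representatives of the restrictions to the components --- not as the rank of a divisor on the metrized complex. The agreement of that intrinsic rank with $r_{\calC}(D)$ is the content of \cite[Proposition 2.1]{AminiBaker_metrizedcurvecomplexes}, and it is precisely the second ingredient (alongside Proposition~\ref{prop_Pic=Pic}) that the paper's one-line proof invokes. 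This comparison is a genuine statement requiring proof (one must, for instance, be able to move effective representatives off the nodes), so it cannot be dismissed as bookkeeping; your argument is complete once you cite or reprove that proposition.
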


\begin{proof}
This is an immediate consequence of \cite[Proposition 2.1]{AminiBaker_metrizedcurvecomplexes} and Proposition \ref{prop_Pic=Pic} above.
\end{proof}

\begin{proof}[Proof of Theorem \ref{thm_logRR}]
We begin by recalling the following two well-known notions:
\begin{itemize}
\item Let $\calC$ be a metrized curve complex (with edge lengths one). Denote by $A_v$ the divisor $\sum_{x_v^f\in\calC_v} x_v^f$ on $\calC_v$ and by $K_v$ a canonical divisor on $\calC_v$. A \emph{canonical divisor} on $\calC$ is given as the sum
\begin{equation*}
K_\calC=\sum_v (A_v + K_v) 
\end{equation*}
(see \cite[Section 3]{AminiBaker_metrizedcurvecomplexes}).
\item Let $X$ be a logarithmically smooth curve over $S=(\Spec k,k^\ast\oplus \N)$ that is logarithmically semistable and vertical. Denote by $\omega_X^{log}$ the relative logarithmic cotangent bundle on $X$ over $S$, as defined in \cite{Kato_logstr} and \cite{Kato_logdef}. It is well-known that in our situation $\omega_X^{log}$ is the dualizing sheaf on $\underline{X}$. 
\end{itemize}

Let $X$ be a logarithmically smooth curve over $S=(\Spec k,k^\ast\oplus \N)$ that is logarithmically semistable and vertical, and denote by $\calC=\calC(X)$ the corresponding metrized curve complex. The following Lemma \ref{lemma_canonical} is well-known.

\begin{lemma}\label{lemma_canonical}
Under the quotient map from Proposition \ref{prop_Pic=Pic}, the image of $[\omega_X^{log}]$ in $\Pic(\calC)$ is the class of the canonical divisor $K_\calC$ on $\calC$. 
\end{lemma}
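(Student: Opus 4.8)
The plan is to trace $[\omega_X^{log}]$ through the quotient map $\Pic^{log}(X)\twoheadrightarrow\Pic(\calC)$ of Proposition \ref{prop_Pic=Pic} and to identify the resulting divisor class on $\calC$ component by component. Recall that this map is induced by pullback along the normalization $\pi\colon\widetilde{X}\to X$: the divisor $D=\big(D_G,(D_v)_{v}\big)$ on $\calC$ attached to a line bundle $\calL$ is characterized by $\calO_{C_v}(D_v)\cong \pi^\ast\calL\vert_{\widetilde{X}_v}$ on each component $C_v=\widetilde{X}_v$, with $D_G$ recording the multidegrees. So it suffices to compute $\pi^\ast\omega_X^{log}\vert_{\widetilde{X}_v}$ for every vertex $v$ and to compare the outcome with the component $A_v+K_v$ of the canonical divisor $K_\calC$.

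Since $\omega_X^{log}$ is the dualizing sheaf of the nodal curve $\underline{X}$ (as recalled above), the key input is the classical adjunction formula relating the dualizing sheaf of a nodal curve to the canonical bundles of its normalized components. Concretely, this dualizing sheaf is the sheaf of Rosenlicht differentials, i.e. meromorphic differentials on $\widetilde{X}$ that are regular away from the nodes, have at worst simple poles at the preimages of the nodes, and have opposite residues along the two branches of each node. Restricting this description to a single component $C_v=\widetilde{X}_v$ yields a natural isomorphism
\begin{equation*}
\pi^\ast\omega_X^{log}\vert_{\widetilde{X}_v}\cong \omega_{C_v}\Big(\sum_{f\text{ at }v}x_v^f\Big)=\omega_{C_v}(A_v),
\end{equation*}
where the sum runs over all half-edges $f$ emanating from $v$ and $A_v=\sum_f x_v^f$ is precisely the boundary divisor appearing in the definition of $K_\calC$. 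A small point of care is the case of loops, where both half-edges of a self-node lie on the same component $C_v$ and hence contribute two distinct points to $A_v$; this is consistent, as the residue condition is imposed separately along each of the two branches at the node.

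It then follows that the divisor $D$ on $\calC$ associated to $\omega_X^{log}$ has each $D_v$ in the linear equivalence class of $A_v+K_v$ on $C_v$, so that $D$ is equivalent on $\calC$ to $\sum_v(A_v+K_v)=K_\calC$; this is exactly the assertion that $[\omega_X^{log}]$ maps to $[K_\calC]$ in $\Pic(\calC)$. As a consistency check one can compare degrees: summing $\deg\big(\omega_{C_v}(A_v)\big)=2g(C_v)-2+\lvert A_v\rvert$ over $v$ and using $\sum_v\lvert A_v\rvert=2\#E(G)$ recovers $2g-2=\deg\omega_X^{log}$, which matches $\deg K_\calC$. The only genuine point requiring justification is the adjunction isomorphism displayed above; everything else is formal from the construction of the map in Proposition \ref{prop_Pic=Pic}, which is why the lemma can reasonably be labeled well-known.
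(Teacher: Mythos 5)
Your proof is correct and follows essentially the same route as the paper: both arguments identify $\omega_X^{log}$ with the dualizing sheaf, describe its sections near a node as Rosenlicht differentials with simple poles and opposite residues at the two branches, and deduce $\pi^\ast\omega_X^{log}\vert_{\widetilde{X}_v}\cong\omega_{C_v}(A_v)$, hence the class of $K_\calC$. Your added remarks on loops and the degree check are harmless elaborations of the same argument.
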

  
\begin{proof}
Our reasoning proceeds in analogy with \cite[Remark 3.1]{AminiBaker_metrizedcurvecomplexes}. Let $x_e$ be a node of $X$ and denote by $x_{f_i}$ (for $i=1,2$) its pre-images in $\widetilde{X}$. A section of  $\omega^{log}_X$ in a neighborhood of $x_e$ then corresponds to a pair of rational functions $s_1$ and $s_2$ on $\tilde{X}$ with simple poles at $x_{f_i}$ such that $\res (s_1,x_{f_1}) + \res (s_2, x_{f_2})=0$. Therefore we have
\begin{equation*}
\pi^\ast\omega^{log}_X\vert_{\widetilde{X}_v}=\calO_{\tilde{X}_v} (A_v+K_v)
\end{equation*}
for divisors $A_v$ and $K_v$ as above, and this proves our claim. 
\end{proof}

Proposition \ref{prop_Pic=Pic} and Lemma \ref{lemma_canonical} show that Theorem \ref{thm_logRR} is an immediate consequence of Amini and Baker's Riemann-Roch formula for metrized curve complexes \cite[Theorem 3.2]{AminiBaker_metrizedcurvecomplexes}, whose proof follows the strategy originally introduced in \cite{BakerNorine_RiemannRoch} and modified in \cite{MikhalkinZharkov_tropJac}.
\end{proof}




\bibliographystyle{amsalpha}
\bibliography{biblio}{}

\end{document}